\newtheorem{theorem}{Theorem}
\newtheorem{corollary}{Corollary}
\theoremstyle{remark}
\newtheorem{remark}{Remark}
\newcommand{\footremember}[2]{%
    \footnote{#2}
    \newcounter{#1}
    \setcounter{#1}{\value{footnote}}%
}
\newcommand{\footrecall}[1]{%
    \footnotemark[\value{#1}]%
}
\newcommand{\bB}{\bar{B}}
\newcommand{\da}{\partial_{\alpha}}
\newcommand{\ddaa}{\partial^2_{\alpha}}
\newcommand{\db}{\partial_{\beta}}
\newcommand{\eps}{\varepsilon}
\newcommand{\heps}{\bar{\varepsilon}}
\newcommand{\hphidp}{\widehat{\phi}_{\text{\tiny DP}}}
\newcommand{\hpsidp}{\widehat{\psi}_{\text{\tiny DP}}}
\newcommand{\hfdp}{\widehat{f}_{\text{\tiny DP}}}
\newcommand{\Kry}{\mathcal{K}}
\newcommand{\mcE}{\mathcal{E}}
\newcommand{\mcG}{\mathcal{G}}
\newcommand{\mcQ}{\mathcal{Q}}
\newcommand{\mcR}{\mathcal{R}}
\newcommand{\mcL}{L}
\newcommand{\phidp}{\phi_{\text{\tiny DP}}}
\newcommand{\phiR}{\phi_{\text{\tiny R}}}
\newcommand{\phiqo}{\phi_{\text{\tiny QO}}}
\newcommand{\phigcvone}{\phi_{\text{\tiny GCV1}}}
\newcommand{\phigcvtwo}{\phi_{\text{\tiny GCV2}}}
\newcommand{\psidp}{\psi_{\text{\tiny DP}}}
\newcommand{\R}{\mathbb{R}}
\newcommand{\RRE}{\text{RRE}}
\newcommand{\ttrace}{\text{trace}}
\newcommand{\hC}{\hat{B}}
\newcommand{\hbC}{\bar{\hat{B}}}
\newcommand{\bC}{\bar{B}}
\newcommand{\tol}{\tau}
\definecolor{malena}{rgb}{0.62, 0.0, 0.77}
\newcommand{\sg}{\textcolor{black}}
\newcommand{\msl}{\textcolor{black}}
\newcommand{\sgnew}{\textcolor{black}}
\newcommand{\mslnew}{\textcolor{black}}
\providecommand{\keywords}[1]{\textbf{\textit{Keywords---}} #1}
\title{Adaptive Regularization Parameter Choice Rules\\for Large-Scale Problems}
\author{Silvia Gazzola\footremember{alley}{Department of Mathematical Sciences. University of Bath, UK.\newline 
Email: \texttt{\{S.Gazzola,M.Sabate.Landman\}@bath.ac.uk}} and Malena Sabat\'e Landman\footrecall{alley}}
\begin{document}

\maketitle

\normalsize

\begin{abstract}
\noindent This paper derives a new class of adaptive regularization parameter choice strategies that can be effectively and efficiently applied when regularizing large-scale linear inverse problems by combining standard Tikhonov regularization and projection onto Krylov subspaces of increasing dimension (computed by the Golub-Kahan bidiagonalization algorithm). The success of this regularization approach heavily depends on the accurate tuning of two parameters (namely, the Tikhonov parameter and the dimension of the projection subspace): these are simultaneously set using new strategies that can be regarded as special instances of bilevel optimization methods, which are solved by using a new paradigm that interlaces the iterations performed to project the Tikhonov problem (lower-level problem) with those performed to apply a given parameter choice rule (higher-level problem). The discrepancy principle, the GCV, 
the quasi-optimality criterion, and Regi\'{n}ska criterion can all be adapted to work in this framework. 
The links between Gauss quadrature and Golub-Kahan bidiagonalization are exploited to prove convergence results for the discrepancy principle, and to give insight into the behavior of the other considered regularization parameter choice rules. 
Several numerical tests modeling inverse problems in imaging show that the new parameter choice strategies lead to regularization methods that are reliable, and intrinsically simpler and cheaper than other strategies already available in the literature.
\end{abstract}

\keywords{regularization parameter choice rules, large-scale linear problems, hybrid methods, Golub-Kahan bidiagonalization, Gauss quadrature, Tikhonov regularization, modified Newton method, discrepancy principle, GCV, Regi\'nska, 
 quasi-optimality, imaging problems.}

\section{Introduction}\label{sec:intro}
This paper considers linear, large-scale, discrete ill-posed problems of the form
\begin{equation}\label{eq:linsys}
Ax + e = b\,,
\end{equation}
where the matrix $A \in \mathbb{R}^{m \times n}$ is ill-conditioned with ill-determined rank (i.e., the singular values of $A$ quickly decay and cluster at zero without an evident gap between two consecutive ones), $x\in\R^n$ is the desired solution, $b\in\R^m$ is the available right-hand side vector, which is affected by some unknown Gaussian white noise $e\in\R^m$. Problems like this model inverse problems arising in a variety of applications, which typically stem from the discretization of first-kind Fredholm integral equations (see \cite{PCH10} and the references therein). 

It is well-known that, in order to compute a good approximation to $x$, one should regularize (\ref{eq:linsys}), i.e., replace (\ref{eq:linsys}) with a problem closely related to it that is less sensitive to perturbations in the data. Although many approaches are possible to achieve this, in this paper we focus on the standard Tikhonov regularization method, which consists in computing
\begin{equation}\label{tikh}
x({\alpha}) = \arg\min_{x\in\R^n}\:\underbrace{{\|Ax-b\|^2}
\,+\,\alpha{\|x\|^2}}
_{=:F(x,\alpha)}\:,
\end{equation}
where the regularization parameter $\alpha\geq0$ has a key role in balancing the effect of the fit-to-data term $\|Ax-b\|^2$ and the regularization term $\|x\|^2$; one typically assumes that  $\alpha$ is between the smallest and the largest singular value of $A$. 
Here and in the following we use the shorthand notation $\|\cdot\|=\|\cdot\|_2=\langle\cdot,\cdot\rangle^{1/2}$ to denote the vector 2-norm, induced by the standard inner product $\langle\cdot,\cdot\rangle$ on $\R^d$, $d\geq 1$. Many 
parameter choice rules have been already derived to choose a suitable $\alpha$ in (\ref{tikh}): the ones 
considered in this paper can be expressed in the framework
\begin{equation}\label{GenP}
\min_{\alpha\geq 0} P(x(\alpha)) \quad\mbox{subject to}\quad x({\alpha}) = \arg\min_{x\in\R^n}F(x,\alpha)\,, 
\end{equation}
where $P(x(\alpha))$ is a condition to allows a suitable choice of $\alpha\geq 0$, and $F(x,\alpha)$ is defined in (\ref{tikh}). A list of the functionals $P(x(\alpha))$ considered in this paper is given in Table \ref{tab:rules} (some expressions, like the discrepancy principle one, may seem unconventional and will be explained in Section \ref{sec:new}). 
%
\newcolumntype{C}[1]{%
 >{\vbox to 6ex\bgroup\vfill\centering}%
p{#1}%
 <{\egroup}}  
 \begin{table}
\centering
\caption{Functionals $P(x(\alpha))$ in (\ref{GenP}) associated to the parameter rules considered in this paper.}\label{tab:rules}
\footnotesize
\begin{tabular}{C{9.4cm}|C{4.2cm}}
\hline
\multicolumn{1}{c|}{$P(x(\alpha))$} & \multicolumn{1}{c}{\textbf{Parameter rules}} \tabularnewline \hline
 \begin{equation}\label{discrPtab}
 b^T\left((\alpha I - \alpha\heps^2 I + 2AA^T\log((AA^T+\alpha I)^{-1})\right)b - b^T(AA^T)^2x(\alpha) \tag{DP}
 \end{equation} & discrepancy principle, \cite{discrP}\\
(where $\heps \simeq \|e\|/\|b\|$, see (\ref{eq:linsys}))\tabularnewline \hline
\begin{equation}\label{GCVtab}
\frac{\|b-Ax(\alpha)\|^2}{(\ttrace(I-A(A^TA+\alpha I)^{-1}A^T))^2}\tag{GCV}
\end{equation} & generalized cross validation, \cite{GCVbib}\\(GCV)
\tabularnewline \hline
\begin{equation}\label{qotab}
\alpha^2x(\alpha)^T(A^TA+\alpha I)^{-2}x(\alpha)\tag{QO}
\end{equation} & quasi-optimality criterion, \cite{QObib}\tabularnewline \hline 
\begin{equation}\label{Reginskatab}\tag{R}
\|b-Ax(\alpha)\| \|x(\alpha)\|
\end{equation} & Regi\'{n}ska criterion, \cite{Regins} \tabularnewline \hline
\end{tabular}
\end{table}
Problem (\ref{GenP}) is formally a bi-level optimization problem, consisting of a lower-level optimization problem whose solution $x(\alpha)$ is an argument of the higher-level minimization problem; see \cite{bilevelPock}. Thanks to the particular form of $F(x,\alpha)$, one can derive a closed-form solution for $x(\alpha)$, and substitute its expression in $P(x(\alpha))$, so that problem (\ref{GenP}) is essentially a single-level optimization problem. However, in practice, one can obtain $x(\alpha)$ directly only when some factorizations of $A$ (such as the SVD) can be computed: this is not the case for large-scale unstructured problems (\ref{eq:linsys}). In these situations, one should resort to an iterative linear solver to approximate the solution $x(\alpha)$ of the lower-level problem in (\ref{GenP}), while a nonlinear solver is used to compute an approximation to the higher-level problem in (\ref{GenP}).  
Because of this, problem (\ref{GenP}) should still be treated as a bi-level optimization problem. 
In particular, an inner-outer iteration scheme is naturally established when solving (\ref{GenP}), which involves 
two stopping criteria: one for the inner iterations (to be repeatedly applied), and one for the outer iterations. This paper considers only Krylov projection methods based on Golub-Kahan bidiagonalization (GKB) to compute an approximation to the lower-level problem where, at iteration $k$, given a Krylov solution  subspace $\Kry_k$ of dimension $k$, an approximation $x_k(\alpha)\in\Kry_k$ of the solution $x(\alpha)$ of (\ref{tikh}) is computed imposing some additional constraints. 
%
%
%
%

When solving problem (\ref{GenP}), two classical approaches 
are possible. The first obvious (but computationally expensive) one is to repeatedly solve problems of the form (\ref{tikh}), once for every value of the regularization parameter computed within the iterations of $\min_{\alpha\geq 0}P(x(\alpha))$. This approach is equivalent to applying the well-known variable projection method to (\ref{GenP}) (see, e.g., \cite{VPM}), and it is sketched in Algorithm \ref{alg:A1}.
\begin{algorithm}
\caption{Variable projection method for problem (\ref{GenP}).}
\label{alg:A1}
\begin{algorithmic}[1]
\State Choose an initial guess $\alpha_0$.
\For{$\ell=1,2,\dots$ until a stopping criterion is satisfied}
\For{$k=1,2,\dots$ until a stopping criterion is satisfied}
\State{Apply an iterative method to (\ref{tikh}) to approximate $x_k(\alpha_\ell)=x_{k(\ell)}(\alpha_\ell)$.}
\EndFor
\State{Apply a step of a nonlinear solver to compute $\alpha_{\ell+1}$ (given $\alpha_\ell$ and $x_k(\alpha_\ell)$}
\EndFor
\State{Take $x_k(\alpha_\ell)=x_{k(\ell)}(\alpha_\ell)$ as an approximation of the solution of (\ref{tikh}).}
\end{algorithmic}
\end{algorithm}
Note that the intermediate values of $\alpha$ so determined are always computed for the full-dimensional problem (\ref{tikh}), which is therefore solved multiple times. In other words, Algorithm \ref{alg:A1} corresponds to a ``first regularize then project'' approach (see \cite[\S 6.4]{PCH10}). As highlighted by the notation $x_{k(\ell)}(\alpha_{\ell})$, the number of iterations in lines 3--5 of Algorithm \ref{alg:A1} depends in general on the current value of $\alpha_{\ell}$. 
At step 4 of Algorithm \ref{alg:A1}, one can apply any iterative solver for linear least squares problems; 
in particular, when using a Krylov method, most of the computations performed to determine $x_k(\alpha_\ell)=x_{k(\ell)}(\alpha_\ell)$ at the $\ell$th outer iteration can be smartly rearranged or recycled to compute $x_k(\alpha_{\ell+1})=x_{k(\ell+1)}(\alpha_{\ell+1})$ at the $(\ell+1)$th outer iteration; see, for instance, \cite{BGV,fastCG,BolOpt}. 

The second and usually 
more computationally convenient approach is to perform a so-called hybrid method, which projects problem (\ref{tikh}) onto Krylov subspaces $\Kry_k$ of increasing dimension $k$. 
The main claimed advantage of hybrid methods is that, if $k\ll\min\{n,m\}$, one can compute a good regularization parameter for small-scale projected problems only; 
see \cite{ChungOpApprox, chooseRP}. 
Indeed, when employing a hybrid method, instead of (\ref{GenP}) one solves a sequence of bi-level optimization problems of the form
\begin{equation}\label{GenPproj}
\min_{\alpha_k\geq 0} P_k(x_k(\alpha_k)) \quad\mbox{subject to}\quad x_k({\alpha_k}) = \arg\min_{x_k\in\Kry_k}F(x_k,\alpha_k)\,, 
\end{equation}
where the functional $P_k$ appearing in the higher-level problem is a specific regularization parameter choice rule to be employed at the $k$th iteration (it is often a projected version of the strategies listed in Table \ref{tab:rules}), and the lower-level problem of order $k$ is the projection of problem (\ref{tikh}) onto the space $\Kry_k$ 
(see Section \ref{sec:Hybrid} for more details). A common framework for hybrid methods is sketched in Algorithm \ref{alg:A2}.
\begin{algorithm}
\caption{Hybrid method for problem (\ref{GenP}).}
\label{alg:A2}
\begin{algorithmic}[1]
\For{$k=1,2,\dots$ until a stopping criterion is satisfied}
\State{Compute the Krylov subspace $\Kry_k$ and project problem (\ref{tikh}).}
\For{$\ell=1,2,\dots$ until a stopping criterion is satisfied}
\State{Apply a nonlinear solver to approximate $\alpha_k=\alpha_{k(\ell)}$ and $x_k(\alpha_k)=x_k(\alpha_{k(\ell)})$ in (\ref{GenPproj}).} 
\EndFor
\EndFor
\State{Take $x_k(\alpha_k)=x_k(\alpha_{k(\ell)})$ as an approximation of the solution of (\ref{tikh}).}
\end{algorithmic}
\end{algorithm}
%
Note that, when solving (\ref{GenPproj}), one fully runs (till convergence) a parameter choice strategy, with the outcome of selecting a suitable regularization parameter for iteration $k$ (i.e., this is in principle a local choice, good for the $k$th projected Tikhonov problem only). 
%
%
%
%
Indeed, when performing hybrid methods, it is often observed that the regularization parameter that is good for the projected  problem may not be good for the full-dimensional problem \cite{chooseRP}: therefore, solving (\ref{GenPproj}) to high precision for all $k$'s may be worthless;  
nonetheless, when $k$ increases, the regularization parameter obtained applying (\ref{GenPproj}) seems to stabilize around a value that is good for the full-dimensional problem, too; see \cite{ChungOpApprox}. As highlighted by the notation $\alpha_{k(\ell)}$, 
the value of the regularization parameter to be employed for the order $k$ lower-level problem in (\ref{GenPproj}) depends on the number of iterations in lines 3--5 of Algorithm \ref{alg:A2}; however, to keep lighter notations, in the following only $\alpha_k$ will be used. 
We can regard hybrid methods as two-parameter methods, where regularization is achieved by jointly and carefully tuning both $k$ and $\alpha_k$; in general the optimal regularization parameter $\alpha_k$ (i.e., the one minimizing the error) increases with $k$, as the projected problem becomes increasingly ill-conditioned and needs more regularization. Since the projected Tikhonov problem is of order $k$, if $k\ll n$ the lower-level problem in (\ref{GenPproj}) can be solved directly, and (\ref{GenPproj}) is indeed a single-level optimization problem: in this setting, only a stopping criterion for the higher-level problem should be set; however, one needs an additional (and often heuristic) stopping criterion to set $k$, 
i.e., to guarantee that problem (\ref{GenPproj}) is a good approximation to problem (\ref{GenP}). According to the classification in \cite[\S 6.4]{PCH10}, hybrid methods are ``first project then regularize'' approaches. There is a rich literature on parameter choice rules adapted or specific for hybrid methods; see, for instance, \cite{Calvetti2004,Chung08,Fenu16,GNR15,Iveta,RenautHybrid} and the references therein.

The goal of this paper is to introduce a new efficient class of parameter choice strategies for large-scale problems (\ref{tikh}), which leverage ideas typical of the hybrid approach to (\ref{GenPproj}), but are applied directly to (\ref{GenP}). In particular, by an innovative use of projection methods (i.e., Krylov subspace methods based on the GKB algorithm) and by exploiting their connections to Gaussian quadrature rules, 
the new strategies simultaneously compute a value for $k$, {$\alpha_k$ and $x_{k}(\alpha_k)$}, 
thereby computing a good approximation of the solution of the original problem (\ref{GenP}). 
The core idea behind the new strategis is to ``interlace'' 
the iterations needed to solve the lower-level problem and the higher-level problem in (\ref{GenP}). These strategies result in 
only one iteration cycle, bypassing both the approaches in Algorithms \ref{alg:A1} and \ref{alg:A2}. Namely (as sketched in Algorithm \ref{alg:new}), each iteration of the new methods consists in 
performing one step of a projection method for solving the linear lower-level problem in (\ref{GenP}), 
and one step of an iterative scheme for solving the nonlinear higher-level problem in (\ref{GenP}). 
\begin{algorithm}
\caption{New adaptive algorithm for problem (\ref{GenP}).}
\label{alg:new}
\begin{algorithmic}[1]
\State Choose an initial guess $\alpha_1$.
\For{$k=1,2,\dots$ until a stopping criterion is satisfied}
\State{Compute the Krylov subspace $\Kry_k$ and project problem (\ref{tikh}).}
\State{Apply a step of a nonlinear solver to compute $\alpha_{k+1}$ (given $\Kry_k$ and $\alpha_k$).}
\EndFor
\State{Take $x_k(\alpha_{k+1})\in\Kry_k$ as an approximation of the solution of (\ref{tikh}).}
\end{algorithmic}
\end{algorithm}
Note that, when performing Algorithm \ref{alg:new}, the approximation subspace for the solution of (\ref{GenP}) is enlarged while a suitable value for $\alpha$ is set. As already mentioned, Algorithm \ref{alg:new} avoids nested iteration cycles, so that only one stopping criterion should be set (this is typically a standard stopping criterion applied to the higher-level problem in (\ref{GenP})). The new strategy, in addition to being conceptually simpler, potentially allows for great computational savings: this is obvious when compared to the approach in Algorithm \ref{alg:A1}; however, note that, for each $k$, the approach in Algorithm \ref{alg:A2} still requires the repeated solution of the lower-level problem (\ref{GenPproj}) which may become expensive when $k$ increases. When the functional $P(x(\alpha))$ in (\ref{GenP}) is the discrepancy principle, 
convergence of the couple $(x_k,\alpha_k)$ computed by Algorithm \ref{alg:new} to the solution $(x,\alpha)$ of (\ref{GenP}) can be proven. Fort the other functionals listed in Table \ref{tab:rules}, theoretical insight into the behavior of Algorithm \ref{alg:new} can be provided. 

We must mention that an approach similar to the adaptive strategies presented in this paper was already derived in \cite{GN} (the so-called ``secant update method''). However, the secant update method handles the discrepancy principle only, and no formal convergence proof was provided in \cite{GN}. The present paper still considers the discrepancy principle as a possible choice for the functional $P(x(\alpha))$ in (\ref{GenP}), but adopts a different nonlinear solver with respect to the secant update method, and gives a convergence proof for the new strategy (when GKB is used to project the linear lower-level problem in (\ref{GenP})). Moreover, the present paper extends this framework to handle all the choices of $P(x(\alpha))$ listed in Table \ref{tab:rules}. We also remark that the idea of exploiting the links between GKB and Gaussian quadrature rules to choose the regularization parameter in (\ref{tikh}) is not completely new: for instance, the authors of \cite{VonMatt} adopt Gaussian quadrature rules to estimate a value of $\alpha$ in the full-dimensional problem (\ref{tikh}) according to GCV, and the authors of \cite{Calvetti1999, Calvetti2004, Fenu16} explore a variety of parameter choice methods (including some of the ones listed in Table \ref{tab:rules}) to be employed in Algorithm \ref{alg:A2}, using Gaussian quadrature rules to link some projected functionals $P_k(x_k(\alpha_k))$ to their full-dimensional counterparts $P(x(\alpha))$, and to set stopping criteria for the number of iterations $k$. The approach proposed in this paper is novel in that GKB and Gaussian quadrature rules are employed in the framework of bi-level optimization problems, and values of $\alpha_k$, and $x_k(\alpha_k)$ approximating the solution of (\ref{GenP}) are simultaneously computed within only one iteration cycle. 


The remaining part of this paper is organized as follows. Section \ref{sec:background} recalls some background material. 
Section \ref{sec:new} unfolds the theory and implementation of the new class of adaptive parameter choice methods. Section \ref{sec:numerics} presents some numerical experiments and comparisons. Section \ref{sec:end} presents concluding remarks.  

\section{Background}\label{sec:background} 

This section briefly recalls basic facts about regularizing Krylov methods based on Golub-Kahan bigiagonalization (GKB), which are the backbones of the strategies proposed in this paper for the solution of (\ref{GenP}), and which are more carefully detailed in \cite[Chapter 4]{NumMethMatrix} and \cite[Chapter 6]{PCH10}. Also some specific links between GKB and Gauss quadrature are briefly recalled (a more general and complete description can be found in \cite{MMQ}): these will be used to derive approximations for the functionals in Table \ref{tab:rules} 
and for devising convergence proofs. 

\subsection{GKB-based iterative regularization methods}\label{sec:Hybrid}
%

Given a matrix $A\in\R^{m\times n}$ and a vector $b\in\R^m$, the $k$th iteration of the GKB algorithm consists in updating partial matrix factorizations of the form
\begin{eqnarray} \label{eq:Lanczos1}
A V_k = U_k B_k + \sigma_{k+1} u_{k+1}e^T_k=U_{k+1} \bB_k \,, \qquad A^T U_k = V_k B_k^T\,,
\end{eqnarray} 
where $V_k \in \mathbb{R}^{n \times k}$ and $U_{k+1}=[U_k,u_{k+1}]=[u_1,\dots,u_k,u_{k+1}] \in \mathbb{R}^{m \times (k+1)}$, with 
$u_1=b/\|b \|$, 
are matrices whose orthonormal columns span the Krylov subspaces $\mathcal{K}_{k}(A^{T} A,A^{T} b)$ and $\mathcal{K}_{k}(A A^{T}, b)$, respectively; 
$B_k$ and $\bB_k$ are lower bidiagonal matrices of the form
\begin{equation}\label{eq:Ck}
B_k=\left[ \begin{array}{ccccc}
\rho_1 &  &  & &\\
\sigma_2 &  \rho_2 & && \\
&\ddots&\ddots&& \\
& & \sigma_{k-1}& \rho_{k-1} & \\\
&&&\sigma_{k} & \rho_k \end{array} \right] \in \R^{k \times k},\quad
\bB_k=\left[ \begin{array}{c}
B_k\\
\sigma_{k+1}e_k^T
\end{array} \right] \in \R^{(k+1) \times k}.
\end{equation}
Here and in the following, $e_i$ denotes the $i$th canonical basis vector of $\R^d$, $d\geq i$. The following 
\begin{equation}\label{nobreak}
\mbox{\textbf{assumption}: the GKB algorithm (\ref{eq:Lanczos1}) does not breakdown,} 
\end{equation}
i.e., $\rho_k,\,\sigma_{k}>0$ for all $k\leq \min\{m,n\}$, will be made through the paper.

It is well-known that many Krylov methods based on GKB are iterative regularization methods, with the number of iterations acting as a regularization parameter. One of the most widespread methods in this class is  arguably LSQR, which is mathematically equivalent to CGLS. The $k$th LSQR iteration approximates the solution of (\ref{eq:linsys}) by taking
\[
x_k=V_ky_k,\quad\mbox{where}\quad y_k=\arg\min_{y\in\R^k}\|\bB_k y-\|b\|e_1\|\,.
\]
By exploiting the first decomposition in (\ref{eq:Lanczos1}) and the properties of the matrices appearing therein, one can easily see that the LSQR solution minimizes the norm of the residual $r_k=b-Ax_k$ among all the vectors belonging to the space $\Kry_k(A^TA,A^Tb)$. 
%

As already hinted in Section \ref{sec:intro}, Krylov methods based on GKB are also commonly employed as hybrid regularization methods (Algorithm \ref{alg:A2}): at the $k$th iteration of a GKB-based hybrid method, the Tikhonov problem (\ref{tikh}) is projected onto the space $\Kry_k(A^TA,A^Tb)$, obtaining
\begin{equation}\label{hybridprojpb}
x_k(\alpha_k) = V_k y_k(\alpha_k)\,, \quad\mbox{where} \quad  y_k(\alpha_k)=  
\arg\min_{y\in\R^k}\: \|\bB_k y-\|b\|e_1\|^2+\alpha_k \|y\|^2\,,
\end{equation}
where the first decomposition in (\ref{eq:Lanczos1}) and the properties of the matrices appearing therein have been used; the iteration-dependent regularization parameter $\alpha_k$ can be determined by solving (\ref{GenPproj}) (lines 3--5 of Algorithm \ref{alg:A2}). The claimed main upside of hybrid methods is their reduced sensitivity to the stopping criterion for the iterations $k$, which allows to compute a typically more accurate solution in larger Krylov subspaces $\Kry_k$ with respect to purely iterative methods; see, for instance, \cite{ChungOpApprox,OS}.

%

The symmetric Lanczos and the GKB algorithms are closely related, in that, multiplying the second equation in (\ref{eq:Lanczos1}) from the left by $A$, and using the first equation in (\ref{eq:Lanczos1}), one obtains
\begin{eqnarray}\label{SymLanczos1}
A A^T U_k &= A V_k B_k  =U_k \underbrace{B_k B_k^T}_{=:T_k} + \sigma_{k+1}  u_{k+1}e^T_k B_k^T = 
U_k T_k + \sigma_{k+1} \rho_{k} u_{k+1}e^T_k\,.
\end{eqnarray} 
Here, the lower bidiagonal matrix $B_k$ defined in (\ref{eq:Ck}) can also be regarded as the Cholesky factor of the symmetric positive definite tridiagonal matrix $T_k=B_kB_k^T$ obtained after $k$ iterations of the symmetric Lanczos algorithm applied to $A A^T$ with initial vector $b$. 
Moreover, multiplying the first expression in (\ref{eq:Lanczos1}) from the left with $A^T$, and using again the second equation in (\ref{eq:Lanczos1}), one obtains
\begin{eqnarray}
A^T A V_k &=& A^T U_k B_k  + \sigma_{k+1} A^T  u_{k+1}e^T_k = A^T U_{k+1} \bar{B}_{k} = V_{k+1} B_{k+1}^T \bar{B}_{k}\nonumber\\ 
&=& V_{k} \underbrace{\bar{B}_{k}^T \bar{B}_{k}}_{=:\hat{T}_k}+\rho_{k+1} \sigma_{k+1} v_{k+1}e^T_{k},\label{SymLanczos2}
\end{eqnarray}
so that $V_k$ can be regarded as the matrix generated by performing $k$ steps of the symmetric Lanczos algorithm applied to $A^T A$, with initial vector $A^T b$. After computing the QR-factorization $\bar{B}_k= Q_k \hat{B}_k^T$, where $\hat{B}_k\in\R^{k\times k}$ is lower bidiagonal, one can see that $\hat{B}_k^T$ is the Cholesky factor of the symmetric positive definite tridiagonal matrix $\hat{T}_k=\bar{B}_k^T\bar{B}_k=\hat{B}_k \hat{B}^T_k$. 

\subsection{Gauss quadrature for approximating \msl{quadratic} forms}\label{sec:Gauss}

Let $C \in \mathbb{R}^{p \times p}$ be a symmetric semi-positive definite matrix, having spectral decomposition \linebreak[4]$C = W  \Lambda W^T$, where $ \Lambda$ is a diagonal matrix whose diagonal elements are the eigenvalues $0\leq\lambda_1\leq\lambda_2\leq\dots\leq\lambda_p$ of $C$, and $W$ is the orthonormal matrix whose columns are the normalized eigenvectors of $C$. This section presents a strategy to compute approximations or bounds for general \msl{quadratic} forms
\begin{equation} \label{eqn:generallinearform}
    f(\phi,C,u)=u^T \phi(C) u\,,
\end{equation}
where $u\in\R^p$ is a given vector and $\phi$ is a given smooth function on the interval $[0,+\infty)$ 
of the real line. Using  standard definitions and derivations, (\ref{eqn:generallinearform}) can be expressed as 
\begin{equation}\label{quadratic}
f(\phi,C,u)=u^T \phi(C) u = u^T W \phi(\Lambda) W^T u = 
\sum^p_{i=1} \phi(\lambda_i) (W^T u)^2_i=
\int_{0}^{+\infty}\!\!\! \phi(\lambda) d\omega(\lambda)=:I(\phi)\,.
\end{equation}
The last equality comes from considering the sum as 
a Riemann-Stieltjes integral, 
where the distribution function $\omega$ is a \sg{non-decreasing} 
step function with jump discontinuities at the eigenvalues $\lambda_i$. The chain of equalities (\ref{quadratic}) makes it natural to consider quadrature rules to approximate the quadratic form in (\ref{eqn:generallinearform}). Gaussian quadrature rules will be employed for this purpose, and they will be computed applying the symmetric Lanczos algorithm to $C$ with initial vector $u$. 
%
%
In the following sections, only particular instances of (\ref{eqn:generallinearform}) are taken into account, which appear in the definition of the functionals $P(x(\alpha))$ listed in Table \ref{tab:rules}. Indeed, only quadratic forms of the kind $b^T\phi(AA^T)b$ and $(A^Tb)^T\phi(A^TA)(A^Tb)$ have to be approximated, so that only the symmetric Lanczos algorithm applied to $AA^T\in\R^{m\times m}$ with initial vector $b\in\R^{m}$, or applied to $A^TA\in\R^{n\times n}$ with initial vector $A^Tb\in\R^{n}$, have to be considered: this is done implicitly by applying the breakdown-free GKB algorithm to $A\in\R^{m\times n}$ and $b\in\R^{m}$ (see assumption (\ref{nobreak}) and equations (\ref{SymLanczos1}) and (\ref{SymLanczos2})). 

Let $T_k=B_kB_k^T\in\R^{k\times k}$ be the symmetric positive definite tridiagonal matrix appearing in (\ref{SymLanczos1}), 
produced after performing $k\leq\min\{n,m\}$ steps of the Lanczos algorithm 
applied to the matrix $A A^T$ with initial vector $b$. Let $\{q_i(\lambda)\}_{i=0}^{\sg{k}}$ be the family of orthonormal polynomials with respect to the inner product induced by the measure $\omega(\lambda)$ (associated to $A A^T$ and $b$), and let $T_k= Y_k \Theta_k Y_k^T$ be the spectral decomposition of $T_k$, where $Y_k$ is the orthonormal matrix whose columns are the normalized eigenvectors of $T_k$, and $\Theta_k$ is the diagonal matrix of eigenvalues $0<\theta_1\leq\dots\leq\theta_k$. It is well-known that the $k$-point Gauss quadrature rule with respect to the measure $\omega(\lambda)$, defined as
\begin{equation}\label{GQ1}
\mathcal{G}_k(\phi ,A A^T, b):= \sum_{j=1}^{k} \phi(\theta_j) \underbrace{||b||^2 (e_1^T(Y_k)e_j)^2}_{=\mu_j}
=  ||b||^2 e_1^T Y_k \phi(\Theta_k) Y_k^T e_1=||b||^2 e_1^T \phi(T_k) e_1\,,
\end{equation}
approximates (\ref{quadratic}) with $C=AA^T$ and $u=b$. More specifically, the 
eigenvalues of $T_k$ are the zeros of the polynomial \sg{$q_k(\lambda)$}, as well as the quadrature nodes, and 
the quadrature weights $\mu_j$ are given by the \sg{rescaled and squared} first components of the eigenvectors of $T_k$. 
Analogously, the $k$-point Gauss-Radau quadrature rule with one assigned node at the origin and with respect to the measure $\omega(\lambda)$, approximating 
(\ref{quadratic}) with $C=AA^T$ and $u=b$, can be obtained by suitably modifying the symmetric Lanczos process to compute a symmetric semi-positive definite matrix $\bar{T}_k$ of order $k$ with one prescribed eigenvalue at the origin. This amounts to taking $\bar{T}_k=\bar{B}_{k-1}\bar{B}_{k-1}^T\in\R^{k\times k}$, where $\bar{B}_{k-1}$ is the $(k-1)\times k$ version of the matrix $\bar{B}_k$ in (\ref{eq:Ck}) (or, alternatively, is the matrix obtained by selecting the first $(k-1)$ columns of the Cholesky factor $B_k$ of $T_k$); see \cite{VonMatt} for a proof. 
Eventually, such a quadrature rule reads
\begin{equation}\label{GRQ1}
\mathcal{R}_k(\phi ,A A^T, b):= \sum_{j=1}^{k} \phi(\bar{\theta}_j)||b||^2 (e_1^T(\bar{Y}_k)e_j)^2
=  ||b||^2 e_1^T \bar{Y}_k \phi(\bar{\Theta}_k) \bar{Y}_k^T e_1=||b||^2 e_1^T \phi(\bar{T}_k) e_1\,,
\end{equation}
where $\bar{T}_k= \bar{Y}_k \bar{\Theta}_k \bar{Y}_k^T$ is the spectral decomposition of $\bar{T}_k$, with $\bar{Y}_k$ orthonormal and \linebreak[4]$\bar{\Theta}_k=\text{diag}(\bar{\theta}_1,\dots,\bar{\theta}_k)$, $0=\bar{\theta}_1<\bar{\theta}_2\leq\dots\leq\bar{\theta}_k$.
%

Now let $\hat{T}_k=\hat{B}_k\hat{B}_k^T \in \R^{k \times k}$ be the symmetric positive definite tridiagonal matrix appearing in (\ref{SymLanczos2}), produced after performing $k\leq \min\{n,m\}$ steps of the Lanczos algorithm  applied to $A^T A$ with initial vector $A^T b$. Similarly to the previous derivations, the $k$-point Gauss quadrature rule with respect to the measure $\omega(\lambda)$ (associated to $A^TA$ and $A^Tb$), defined as
\begin{equation}\label{GQ2}
\mathcal{G}_k(\phi,A^T A, A^T b):= \sum_{j=1}^{k} \phi(\hat{\theta}_j)||A^Tb||^2 (e_1^T(\hat{Y}_k)e_j)^2
=||A^Tb||^2 e_1^T \phi(\hat{T}_k) e_1\,,
\end{equation}
approximates (\ref{quadratic}) with $C=A^TA$ and $u=A^Tb$. Here, using notations analogous to the previous ones, $\hat{T}_k= \hat{Y}_k \hat{\Theta}_k \hat{Y}_k^T=\hat{Y}_k\text{diag}(\hat{\theta}_1,\dots,\hat{\theta}_k)\hat{Y}_k^T$ is the spectral decomposition of the matrix $\hat{T}_k$. 
Finally, the matrix $\bar{\hat{T}}_k := \bar{\hat{B}}_{k-1} \bar{\hat{B}}_{k-1}^T\in\R^{k\times k}$, where $\bar{\hat{B}}_{k-1}$ is the matrix constructed by selecting the first $k-1$ columns of the Cholesky factor $\hat{B}_{k}$ of $\hat{T}_k$, is symmetric semi-positive definite with one prescribed eigenvalue at the origin. Therefore, the $k$-point Gauss-Radau quadrature rule with respect to the measure $\omega (\lambda)$ (associated to $A^T A$ and $A^T b$), with one node assigned at the origin, can be expressed as 
\begin{equation}\label{GRQ2}
\mathcal{R}_k(\phi, A^T A, A^T b):= \sum_{j=1}^{k} \phi(\bar{\hat{\theta}}_j)||A^Tb||^2 (e_1^T(\bar{\hat{Y}}_k)e_j)^2
=||A^Tb||^2 e_1^T \phi(\bar{\hat{T}}_k) e_1\,,
\end{equation}
where $\bar{\hat{T}}_k= \bar{\hat{Y}}_k \bar{\hat{\Theta}}_k \bar{\hat{Y}}_k^T=\bar{\hat{Y}}_k\text{diag}(\bar{\hat{\theta}}_1,\dots,\bar{\hat{\theta}}_k)\bar{\hat{Y}}_k^T$ is the spectral decomposition of the matrix $\bar{\hat{T}}_k$. 


Assuming that $\phi$ is a $2k$-times differentiable function, the quadrature errors $\mcE_{\mcQ_k}(\phi)=I(\phi)-\mcQ_k(\phi)$ associated to $k$-point Gauss and Gauss-Radau quadrature rules (with $\mcQ_k(\phi)=\mcG_k(\phi)$ and $\mcQ_k(\phi)=\mcR_k(\phi)$, respectively, and where the dependence on the matrices $AA^T$, $A^TA$, and the vectors $b$, $A^Tb$, has been removed in the interest of generality), are given by
\begin{equation}\label{GError}
\mcE_{\mcG_k}(\phi)=\frac{\phi^{(2k)}(\zeta_{\mcG_k})}{(2k)!}\int_{0}^{+\infty}\prod_{i=1}^k(t-\zeta_i)^2 d\omega(t)
\end{equation}
and
\begin{equation}\label{GRError}
\mcE_{\mcR_k}(\phi)=\frac{\phi^{(2k-1)}(\bar{\zeta}_{\mcR_k})}{(2k-1)!}\int_{0}^{+\infty}t\prod_{i=2}^k(t-\sg{\bar{\zeta}_i})^2 d\omega(t)\,,
\end{equation}
respectively. Here $\zeta_{\mcG_k},\,\bar{\zeta}_{\mcR_k}\in [\lambda_1,\lambda_{\sg{\min\{n.m\}}}]$. The $\zeta_i$'s denote the nodes of a Gauss quadrature rule (so that $\zeta_i=\theta_i$ for (\ref{GQ1}) and $\zeta_i=\hat{\theta}_i$ for (\ref{GQ2})); the $\bar{\zeta}_i$'s denote the nodes of a Gauss-Radau  quadrature rule (so that $\bar{\zeta}_i=\bar{\theta}_i$ for (\ref{GRQ1}) and $\bar{\zeta}_i=\bar{\hat{\theta}}_i$ for (\ref{GRQ2})). As an immediate consequence of formulas (\ref{GError}) and (\ref{GRError}), if the derivatives of the function $\phi$ have constant sign on $[\lambda_1,\lambda_{\sg{\min\{n.m\}}}]$, then upper or lower bounds for quadratic forms of the kind $b^T \phi(AA^T)b$ and $b^TA\phi(A^TA)A^Tb$ can be obtained by employing Gauss and Gauss-Radau quadrature rules of the form (\ref{GQ1})--(\ref{GRQ2}): this will be explored more carefully in the following sections, for specific functionals.

\section{Adaptive parameter choice strategies}\label{sec:new}

This section explains how the GKB algorithm can be adopted in connection with a Newton-like nonlinear solver to approximate the solution of  the bi-level optimization problem (\ref{GenP}). In the following, the closed-form expression $x(\alpha)=(A^TA+\alpha I)^{-1}A^Tb$ and algebraic manipulations thereof will be often used for deriving analytical expressions of the functionals $P(x(\alpha))$ in Table \ref{tab:rules} as quadratic forms (\ref{eqn:generallinearform}) with $C=A^TA$ and $u=A^Tb$, or with $C=AA^T$ and $u=b$. 

Although the methods in this paper are meaningful for large-scale problems, some numerical illustrations involving a moderate-scale problem  generated thorough MATLAB's toolbox \emph{IR Tools} \cite{IRtools} will be presented in this section to show the typical behavior of the functionals $P(x(\alpha))$ in Table \ref{tab:rules} (which can be easily computed once the SVD of $A$ is available) and of upper and lower bounds thereof. Namely, the following instructions are used 
\begin{equation}\label{illustration}
\text{\texttt{optn.trueImage = 'pattern1'; [A,b,x] = PRblur(64,optn); bn = PRnoise(b);}}
\end{equation}
\normalsize
to generate an image deblurring test problem involving a simple geometric test image of size $64\times 64$ pixels (so that the coefficient matrix $A$ has order $4096$), 
a medium Gaussian blur, and Gaussian white noise with $\|e\|/\|b\|=10^{-2}$.

\subsection{Discrepancy Principle}\label{sec:discr} 
The functional associated to the discrepancy principle (\ref{discrPtab}) in Table \ref{tab:rules} can be naturally regarded as a quadratic form
\[
P(x(\alpha))=  b^T\psidp(AA^T,\alpha)b\,,
\]
where
\begin{equation}\label{primdiscralpha}
\psidp(t,\alpha)=\alpha - \alpha\heps^2 + 2t\log((\alpha + t)^{-1}) - t^2(\alpha + t)^{-1}\,,
\end{equation}
is a function defined for $t\geq 0$, $\alpha> 0$ (see also (\ref{quadratic})), and $\heps= \eps /\|b\|$ is an estimate for the noise level $\| e \| / \|b\|$ in (\ref{eq:linsys}) (possible safety factors are already incorporated in $\heps$).

The first and second derivatives with respect to $\alpha$ of the function $\psidp(t,\alpha)$ in (\ref{primdiscralpha}) read
\begin{equation} \label{eqn:psi}
\phidp(t,\alpha):=\da\psidp(t,\alpha) = \alpha^2(t+\alpha)^{-2} - \heps^2 ,\quad \ddaa\psidp(t,\alpha) = 2\alpha t(t+\alpha)^{-3}\,,
\end{equation}
respectively. Since $\ddaa  \psidp(t,\alpha) \geq 0$ for $\alpha> 0$, then $\ddaa  b^T\psidp(AA^T,\alpha)b \geq 0$ for $\alpha> 0$ (i.e., $b^T \psidp(A A^T,\alpha) b$ is convex as a function of $\alpha$ for $\alpha>0$). Therefore, solving (\ref{GenP}) amounts to solving the nonlinear equation 
\begin{equation}\label{eq:discr}
0=b^T\phidp(AA^T,\alpha)b=\alpha^2b^T(AA^T+\alpha I)^{-2}b-\eps^2=\|b-Ax(\alpha)\|^2-\eps^2\,
\end{equation}
with respect to $\alpha$ (see \cite{Calvetti1999} for complete derivations). 
%
Since the continuous function $b^T\phidp(AA^T,\alpha) b$ is increasing in $\alpha$, there exists a unique zero $\alpha^\ast$ in $(0,\infty) $ provided that 
\begin{equation}\label{conddp}
\lim_{\alpha\rightarrow 0}b^T\phidp(AA^T,\alpha)b=-\eps^2<0\quad \text{and}\quad\lim_{\alpha\rightarrow +\infty}b^T\phidp(AA^T,\alpha)b=\|b\|^2-\eps^2>0\,,
\end{equation}
where the last inequality obviously holds if 
$\eps^2<\|b\|^2$ (this is a reasonable bound for the amount of noise in the data, which will be assumed in the following). 
Equation (\ref{eq:discr}) agrees with the standard discrepancy principle formulation 
and 
one can easily apply a zero-finder (e.g., Newton method) to compute $\alpha^\ast$. 
Since $\phidp(t,\alpha)$ is not convex for $\alpha> 0$, Newton method is not guaranteed to globally converge.  
As proposed in \cite{ZeroFind}, the simple change of variable $\beta = 1/\alpha$ is performed in (\ref{eq:discr}), so that
\begin{equation}\label{eq:discrcvx}
\hphidp(t,\beta) := (\beta t+1)^{-2} - \heps^2\quad\sg{\text{and}\quad \hfdp(\hphidp,AA^T,b,\beta):=b^T\hphidp(AA^T,\beta)b-\eps^2}
\end{equation}
\sg{are} decreasing and convex for $\beta>0$, and a unique zero $\beta^\ast$ exists if conditions analogous to (\ref{conddp}) are satisfied. 
Newton method applied to solve the nonlinear equation (with respect to $\beta$)
\begin{equation}\label{eq:discrmatrbeta}
\sg{0=\hfdp(\hphidp,AA^T,b,\beta)=b^T(\beta AA^T+I)^{-2}b-\eps^2}
\end{equation}
globally converges, and can be easily implemented if the SVD of $A$ is available. Since this is not the case in general for large-scale problems (as remarked in Section \ref{sec:intro}), an alternative solution approach for (\ref{eq:discrmatrbeta}) that fits into the framework of Algorithm \ref{alg:new} is derived. 

\paragraph{A modified Newton method for (\ref{eq:discr}).}

The following result proves the convergence of a specific modification of the classical Newton zero finder, \sg{which can be used in a general setting whenever dealing with a sequence of functions $\{\mcL_k\}_{k\geq1}$ satisfying certain assumptions. This method will be later applied to solve (\ref{eq:discr})}. 

\begin{theorem}\label{them:dpnewton}
Let $f:(0,+\infty)\rightarrow \R$ be a strictly decreasing, convex, differentiable function such that 
$\lim_{x\rightarrow +\infty}f(x)<0$. Let $\{\mcL_k\}_{k\geq1}:(0,+\infty)\rightarrow \R$, be a sequence of strictly decreasing, convex, differentiable, increasing lower bounds for $f$, i.e., 
\begin{equation}\label{bounds}
\mcL_k(x)\leq\mcL_{k+1}(x)\leq f(x)\mbox{ for all }k\geq1,\;x\in(0,\msl{+\infty})\,,
\end{equation}
\sg{such that $\lim_{x\rightarrow 0}L_k(x)>0$ 
for all $k\geq 1$, and $\lim_{k\rightarrow +\infty}L_k(x)=f(x)$} for all $x\in(0,\msl{+\infty})$. Then, given $x_1$ such that $\mcL_1(x_1)\geq0$, the sequence $\{x_k\}_{k\geq1}$ obtained from the recursion
\begin{equation}
x_{k+1}=x_k-\frac{\mcL_k(x_k)}{\mcL_k^\prime(x_k)}
\label{newtonstep2}
\end{equation}
monotonically converges to the root $x^\ast$ of $f$ from the left.
\end{theorem}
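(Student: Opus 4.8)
The plan is to run the standard monotone-convergence argument for Newton's method on a convex decreasing function, but applied to the \emph{moving target} $\mcL_k$ rather than a fixed $f$, and then pass to the limit. First I would establish that each iterate is well-defined and stays to the left of the root: if $\mcL_k(x_k)\geq 0$, then since $\mcL_k$ is strictly decreasing with $\lim_{x\to 0}\mcL_k(x)>0$ and $\lim_{x\to+\infty}\mcL_k(x)\le \lim_{x\to+\infty}f(x)<0$, the function $\mcL_k$ has a unique root $x_k^\ast\in(0,+\infty)$, and $x_k\le x_k^\ast$. Because $\mcL_k$ is convex and differentiable, $\mcL_k'(x_k)<0$ (a strictly decreasing convex function on a half-line has strictly negative derivative everywhere, since if the derivative vanished somewhere it would be nonnegative thereafter), so the Newton step $x_{k+1}=x_k-\mcL_k(x_k)/\mcL_k'(x_k)$ moves to the right: $x_{k+1}\ge x_k$.

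Next I would show the Newton step does not overshoot. By convexity, the tangent line to $\mcL_k$ at $x_k$ lies below the graph of $\mcL_k$, hence the tangent line's root $x_{k+1}$ satisfies $x_{k+1}\le x_k^\ast$; and since $\mcL_k\le f$ with $f$ also decreasing, $x_k^\ast\le x^\ast$, the root of $f$. So $x_k\le x_{k+1}\le x^\ast$ for all $k$: the sequence $\{x_k\}$ is nondecreasing and bounded above by $x^\ast$, hence converges to some limit $\bar x\le x^\ast$. The one subtlety here is that to iterate the argument I need $\mcL_{k+1}(x_{k+1})\ge 0$ so that the hypothesis of the induction step is satisfied — this follows because $\mcL_{k+1}(x_{k+1})\ge\mcL_k(x_{k+1})\ge 0$, where the first inequality is the monotonicity $\mcL_k\le\mcL_{k+1}$ from (\ref{bounds}) and the second holds because $x_{k+1}\le x_k^\ast$ and $\mcL_k$ is decreasing.

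Finally I would identify the limit $\bar x$ with $x^\ast$. The natural route is to note $\mcL_k(x_k)\ge 0$ for all $k$, so if I can take limits I get $f(\bar x)\ge 0$, which combined with $\bar x\le x^\ast$ and $f$ strictly decreasing forces $\bar x=x^\ast$. Making ``take limits'' rigorous is the main obstacle: I have a double limit $\mcL_k(x_k)$ where both the function index and the argument move. I would handle it by fixing any $x<x^\ast$; then for $k$ large enough $x_k\ge x$ (since $x_k\uparrow\bar x$ and I can argue $\bar x$ cannot be strictly less than $x^\ast$ by a contradiction), so $0\le\mcL_k(x_k)\le\mcL_k(x)$ — wait, that bound goes the wrong way. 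Better: suppose for contradiction $\bar x<x^\ast$. Pick $\bar x<x<x^\ast$. By pointwise convergence $\mcL_k(x)\to f(x)>0$; but also $x_k\le\bar x<x$ for all $k$, and $\mcL_k(x_k)\to ?$. Here I use that each $\mcL_k$ is decreasing, so $\mcL_k(x_k)\ge\mcL_k(\bar x)\ge\mcL_1(\bar x)$, which does not immediately help. The cleanest fix is to control the derivative: since the $\mcL_k$ are convex increasing bounds converging to the differentiable $f$, one gets local equi-Lipschitz bounds on any compact subinterval of $(0,+\infty)$ (convexity plus pointwise convergence to a finite limit gives locally uniform convergence and bounded difference quotients), so $|\mcL_k'(x_k)|$ stays bounded away from $\infty$ near $\bar x$; then from $x_{k+1}-x_k=-\mcL_k(x_k)/\mcL_k'(x_k)\to 0$ and the equi-Lipschitz bound I conclude $\mcL_k(x_k)\to 0$. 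Since $|\mcL_k(x_k)-f(\bar x)|\le|\mcL_k(x_k)-\mcL_k(\bar x)|+|\mcL_k(\bar x)-f(\bar x)|$, with the first term controlled by the equi-Lipschitz bound times $|x_k-\bar x|\to0$ and the second term $\to0$ by pointwise convergence, we get $f(\bar x)=0$, i.e. $\bar x=x^\ast$. I would present this last step carefully, as it is where the assumptions ``$\mcL_k$ convex'' and ``$\mcL_k\uparrow f$ pointwise'' are genuinely needed together.
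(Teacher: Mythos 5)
Your argument is correct, and for the core of the proof (the chain $0=t_k(x_{k+1})\leq \mcL_k(x_{k+1})\leq\mcL_{k+1}(x_{k+1})\leq f(x_{k+1})$ obtained from convexity of $\mcL_k$ and the nesting (\ref{bounds}), yielding $x_k\leq x_{k+1}\leq x^\ast$ by induction) it coincides with the paper's proof. Where you genuinely diverge is the final step, identifying the limit $\bar x$ with $x^\ast$. The paper disposes of this by ``taking the limit'' in the chain of inequalities and appealing to ``the convergence of Newton method,'' which is not really an argument: it is exactly the double limit (index $k$ and argument $x_k$ moving together) that you flag as the main obstacle. Your resolution --- use convexity plus pointwise convergence to get a uniform bound $M$ on $|\mcL_k'|$ near $\bar x$ (e.g.\ $|\mcL_k'(x_k)|\leq (\mcL_k(a)-\mcL_k(x_k))/(x_k-a)\leq f(a)/(x_1-a)$ for any fixed $a<x_1$), deduce $\mcL_k(x_k)=|\mcL_k'(x_k)|\,(x_{k+1}-x_k)\to 0$ from the convergence of $\{x_k\}$, and separately $\mcL_k(x_k)\to f(\bar x)$ from the equi-Lipschitz bound and pointwise convergence, hence $f(\bar x)=0$ --- is the missing mechanism, and it correctly isolates where convexity and the pointwise convergence $\mcL_k\to f$ are both indispensable. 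So your proof is not merely an alternative route; it supplies a rigorous closing argument where the paper's is only a sketch. Two small presentational points: the exploratory false starts in your last paragraph (``wait, that bound goes the wrong way'') should be pruned in a final write-up, and you could streamline the conclusion by noting that $\mcL_k(x_k)\geq\mcL_k(\bar x)\to f(\bar x)$ directly contradicts $\mcL_k(x_k)\to 0$ whenever $f(\bar x)>0$, which avoids one of the two triangle-inequality estimates.
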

\begin{proof} 
\sg{The assumptions assure that the functions $f$ and $L_k$, $k\geq 1$, have exactly one zero in $(0,+\infty)$}. Given $x_k$, $k\geq 1$, define the function
\[
t_k(x)=\mcL_k(x_k)+\mcL_k^\prime(x_k)(x-x_k)\,,
\]
i.e., the tangent line to the graph of $\mcL_k$ at $(x_k,\mcL_k(x_k))$. Relation (\ref{newtonstep2}) is established by imposing $t_k(x_{k+1})=0$ and, together with  the convexity of $\mcL_{k}$ and (\ref{bounds}), leads to
\begin{equation}\label{lowerInequalities}
0=t_{k}(x_{k+1}) \leq \mcL_{k}(x_{k+1})\leq \mcL_{k+1}(x_{k+1}) \leq f(x_{k+1}).     
\end{equation}
Replacing $k$ by $k-1$ in the above relation implies $\mcL_{k}(x_{k})\geq0$ which, together with $\mcL_k^\prime\sg{(x_k)}<0$ \sg{and (\ref{newtonstep2})}, leads to $x_k\leq x_{k+1}$. Moreover, since $f$ is decreasing and $f(x_{k+1})\geq 0$, $x_{k+1}\leq x^\ast$. Therefore the sequence $\{x_{\msl{k}}\}_{k\geq 1}$ is monotonically increasing and bounded above by $x^\ast$. \sg{Taking the limit for $k\rightarrow\infty$ in (\ref{lowerInequalities}) implies $L_{k+1}(x_{k+1})\rightarrow f(x_{k+1})$, so that $x_{k+1}$ converges to $x^\ast$ thanks to the convergence of Newton method.}
\end{proof}

\begin{remark}
\sg{Given a sequence of functions $\{\mcL_k\}_{k\geq1}$,} 
the $k$th iteration of the modified Newton method (\ref{newtonstep2}) consists in performing only one (standard) Newton iteration on the $k$th function $\mcL_{k}$. 
 Figure \ref{fig:modNewton} gives a geometrical illustration \sg{of recursion (\ref{newtonstep2})}. 
\begin{figure}[h]
\center
\includegraphics[width=8cm]{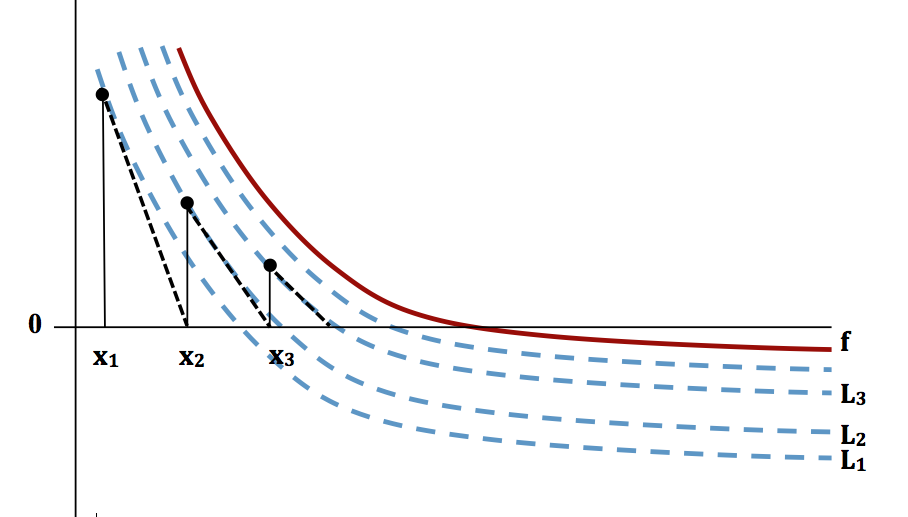}
\caption{Geometrical interpretation of the modified Newton method (\ref{newtonstep2}).}
\label{fig:modNewton}
\end{figure}
\end{remark}
\begin{remark} Theorem \ref{them:dpnewton} still holds if assumption $\mcL_1(x_1)\geq 0$ is removed and $f,\{\mcL_k\}_{\msl{k \geq 1}}:\R\rightarrow\R$ (i.e., considering functions defined on the whole real line). 
Indeed, in this setting
\[
0=t_1(x_2)\leq\mcL_1(x_2),
\]
so that the reasoning in the proof of Theorem \ref{them:dpnewton} can be applied starting from $x_2$. 
\end{remark}

\sg{Turning now to the discrepancy principle (\ref{eq:discrmatrbeta}),} since the matrix functional $\hfdp$ is strictly decreasing, convex, and differentiable with respect to $\beta$, and \msl{$\partial^{(2k)}_t \hphidp (t,\beta) < 0 $ for all $k\geq 1$, $t\geq 0, \beta>0$}, lower bounds for $\hfdp$ 
are obtained by applying the Gauss quadrature rule, leading to
\begin{equation}\label{eq:discrmatrbound}
\mathcal{G}_k(\hphidp, AA^T,b, \beta)=\|b\|^2e_1^T(\beta B_k B_k^T+I)^{-2}e_1\ - \eps^2, \quad k\geq 1;
\end{equation}
see Section \ref{sec:Gauss} and equation (\ref{GError}). \sg{These bounds are \sg{increasing} (see \cite[Theorem 2.1]{Lopez08})}, so that, under assumption (\ref{nobreak}),  
\[
\mathcal{G}_1(\beta)\leq \mathcal{G}_2(\beta)\leq\dots\leq \mathcal{G}_{\sg{p}-1}(\beta)\leq \mathcal{G}_{p}(\beta)=\hfdp(\beta)\,,\quad p=\min\{n,m\}\,;
\]
\sg{the shorthand notation $\mathcal{G}_k(\beta)$ has been used for $\mathcal{G}_k(\hphidp,AA^T,b,\beta)$; similarly, $\hfdp$ is defined in (\ref{eq:discrcvx}) and also depends on $\hphidp$, 
$AA^T$, and $b$.} 
The functions $\mcG_k\sg{(\beta)}$, $1\leq k\leq p$, are strictly decreasing, convex, and differentiable with respect to $\beta$, 
$\lim_{\beta\rightarrow 0}\mathcal{G}_k(\beta)=\|b\|^2\sg{-\eps^2}>0$ \sg{(reasonable bound for the amount of noise, see (\ref{conddp}))}, and 
$\lim_{\beta\rightarrow +\infty}\mathcal{G}_k(\beta)=-\eps^2<0$. \sg{The same limits hold for $\hfdp$. }
\sg{The above derivations assure that} the assumptions of Theorem \ref{them:dpnewton} are satisfied, so that the following result holds. 

\begin{corollary}\label{cor1}
Let $A\in\R^{m\times n}$ and $b\in\R^m$ be as in (\ref{eq:linsys}), and let $\hphidp$ be defined as in (\ref{eq:discrcvx}); consider $\hfdp$ in (\ref{eq:discrcvx}) and $\{\mcG_k\}_{k}$ 
in (\ref{eq:discrmatrbound}) as functions of $\beta>0$. Let $\beta_1>0$ be such that $\mcG_1(\beta_1)\geq 0$. Then the sequence $\{\beta_k\}_{k\geq 1}$ obtained from the recursion 
\begin{equation}
\beta_{k+1}=\beta_k-\frac{\mcG_k(\beta_k)}{\mcG_k^\prime(\beta_k)}
\label{newtonstepdiscr}
\end{equation}
monotonically converges to the root $\beta^\ast$ of (\ref{eq:discrmatrbeta}) from the left. 
\end{corollary}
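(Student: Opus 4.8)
The plan is to obtain Corollary \ref{cor1} as a direct application of Theorem \ref{them:dpnewton}, with $f := \hfdp$ and $\mcL_k := \mcG_k$ (both viewed as functions of $\beta > 0$). Essentially all the required verifications have already been assembled in the discussion preceding the statement; the proof would merely collect them and check that they match, one-to-one, the hypotheses of the theorem. First I would record the properties of $f = \hfdp$: writing $\hfdp(\beta) = b^T(\beta AA^T + I)^{-2}b - \eps^2$ and diagonalizing $AA^T$, one sees that $\hfdp$ is a nonnegative-coefficient combination of the strictly decreasing, convex maps $\beta \mapsto (\beta\lambda_i + 1)^{-2}$, hence is itself strictly decreasing, convex, and differentiable on $(0,+\infty)$; the endpoint behavior $\lim_{\beta\to 0}\hfdp(\beta) = \|b\|^2 - \eps^2 > 0$ (the assumed noise bound $\eps^2 < \|b\|^2$) and $\lim_{\beta\to+\infty}\hfdp(\beta) = -\eps^2 < 0$ was already noted in the text, and in particular gives $\lim_{\beta\to+\infty}\hfdp(\beta) < 0$ as Theorem \ref{them:dpnewton} requires.

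Next I would carry out the same analysis for the sequence $\mcG_k$. By the identical spectral argument applied to $T_k = B_k B_k^T$ --- which is symmetric positive definite precisely because of the no-breakdown assumption (\ref{nobreak}) --- each $\mcG_k(\beta) = \|b\|^2 e_1^T(\beta B_k B_k^T + I)^{-2}e_1 - \eps^2$ is strictly decreasing, convex, and differentiable on $(0,+\infty)$, with $\lim_{\beta\to0}\mcG_k(\beta) = \|b\|^2 - \eps^2 > 0$ and $\lim_{\beta\to+\infty}\mcG_k(\beta) = -\eps^2 < 0$. The ordering $\mcG_k(\beta) \leq \mcG_{k+1}(\beta) \leq \hfdp(\beta)$ for all $\beta > 0$ and $k \geq 1$ is exactly the chain displayed just before the corollary: the lower-bound property follows from the Gauss quadrature error formula (\ref{GError}) applied to $\hphidp$, the monotonicity in $k$ is \cite[Theorem 2.1]{Lopez08}, and the termination $\mcG_p(\beta) = \hfdp(\beta)$ at $p = \min\{n,m\}$ holds because the $p$-node Gauss rule is exact for the underlying measure; this last identity also makes $\lim_{k\to+\infty}\mcG_k(\beta) = \hfdp(\beta)$ immediate, the sequence being eventually constant in $k$.

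With these facts in hand, the hypotheses of Theorem \ref{them:dpnewton} are met with $f = \hfdp$, $\mcL_k = \mcG_k$, and $x_1 = \beta_1$ (the assumption $\mcG_1(\beta_1) \geq 0$ being precisely $\mcL_1(x_1) \geq 0$); the theorem then yields that recursion (\ref{newtonstepdiscr}) produces a monotonically increasing sequence converging from the left to the root of $\hfdp$, which is by definition the solution $\beta^\ast$ of (\ref{eq:discrmatrbeta}). The one point that warrants a little care --- the closest thing to an obstacle here --- is the bookkeeping around the no-breakdown assumption: one must make sure that (\ref{nobreak}) really does guarantee both that $T_k$ is positive definite for every $k \leq p$ (so that each $\mcG_k$ is well defined with the stated monotonicity, convexity, and endpoint limits) and that the measure $\omega$ associated with $(AA^T, b)$ has exactly $p$ points of increase, all strictly positive, so that the $p$-node Gauss rule actually reproduces $\hfdp$ and the limit $\lim_{\beta\to+\infty}\hfdp(\beta)$ equals $-\eps^2$ (rather than merely being $\leq 0$). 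Beyond this, the proof is a routine transcription of the preceding discussion into the language of Theorem \ref{them:dpnewton}.
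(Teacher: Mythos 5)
Your proposal is correct and follows essentially the same route as the paper: the text preceding the corollary assembles exactly the verifications you list (strict decrease, convexity, differentiability, the nested lower-bound chain from the Gauss quadrature error formula and \cite[Theorem 2.1]{Lopez08}, the endpoint limits, and exactness at $k=p=\min\{m,n\}$), and then invokes Theorem \ref{them:dpnewton} with $f=\hfdp$ and $\mcL_k=\mcG_k$. Your extra remark on the role of the no-breakdown assumption (\ref{nobreak}) in guaranteeing positive definiteness of $T_k$ and exactness of the $p$-node rule is a sensible, if implicit in the paper, point of care.
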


%
\begin{remark}
Relation (\ref{newtonstepdiscr}) is formally similar to (\ref{newtonstep2}). Notably, since $\mathcal{G}_{k}(\beta)=\hfdp(\beta)$ for $k\geq p=\min\{n,m\}$, relation (\ref{newtonstepdiscr}) reduces to (standard) Newton method when $k\geq p$. However, this never happens in practice, because the bounds $\mathcal{G}_{k}(\beta)$ are observed to quickly approach $\hfdp(\beta)$ and the convergence of (standard) Newton method is quadratic; see also Section \ref{sec:numerics}.
\end{remark}
Referring to the framework of Algorithm \ref{alg:new}, it is clear that a new value of the regularization parameter for problem (\ref{tikh}) is computed at the $k$th iteration using relation (\ref{newtonstepdiscr}) (recall that $\alpha_k=1/\beta_k$). To achieve this, at the $k$th iteration of Algorithm \ref{alg:new}, the Krylov subspace $\Kry_k(AA^T,b)$ is needed to compute $\mcG_k$ in (\ref{eq:discrmatrbound}), so that $k$ iterations of the GKB algorithm should be performed (see Section \ref{sec:Gauss} and equation (\ref{GQ1})). The computational cost of this task is dominated by $O(2kmn)$ floating point operations, since two matrix vector products (one with $A$ and one with $A^T$) are computed at each GKB iteration; the cost of computing the quantity $(\beta B_k B_k^T+I)^{-1}e_1$ is $O(k)$ floating point operations \sgnew{(exploiting the tridiagonal structure of the involved matrices)}, which is negligible. 
However, it is still unclear how an approximate solution $x_k(\alpha_{k+1})$ for problem (\ref{tikh}) can be computed. To achieve this, one needs to consider the space $\Kry_k(A^TA,A^Tb)$, and project problem (\ref{tikh}) onto it, i.e., solve the problem (\ref{hybridprojpb}) (with $\alpha_{k}=1/\beta_{k+1}$). This can be done inexpensively once the bound $\mcG_k$ is computed, since $k$ iterations of the GKB algorithm generate both spaces $\Kry_k(A^TA,A^Tb)$ and $\Kry_k(AA^T,b)$ (see Section \ref{sec:Hybrid}), and only the order-$k$ least squares problem in (\ref{hybridprojpb}) 
needs to be solved to \sgnew{compute $\widehat{y}_k(\beta_{k+1}):=y_k(1/\alpha_{k+1})\in\R^k$ ($O(k)$ floating point operations) and to then form $\widehat{x}_{k}(\beta_{k+1})=x_k(1/\alpha_{k+1})$ 
($O(kn)$ floating point operations). The cost of these computations is negligible if} 
$k\ll\min\{n,m\}$. 
According to Algorithm \ref{alg:new}, the task of solving problem (\ref{hybridprojpb}) can be performed only once a stopping criterion for the iterations is satisfied. However, \mslnew{$\widehat{y}_k(\beta_{k+1})\in\R^k$} may be needed to devise a suitable stopping criterion, and therefore should be computed at a negligible additional cost at each iteration. 
Indeed, it should be remarked that the discrepancy functional $\hfdp$ (\ref{eq:discrcvx}) associated to the approximate solution $\widehat{x}_k(\beta_{k+1})$ satisfies 
\begin{equation}\label{eq:discrmatrboundup}
\|b\|^2e_1^T(\beta_{k+1} \bB_k \bB_k^T+I)^{-2}e_1\ - \eps^2=:\mathcal{R}_{k+1}(\hphidp, AA^T,b, \beta_{k+1}), \quad k\geq 1\,;
\end{equation}
see (\ref{GQ2}) (full derivations are provided in \cite{Calvetti1999}). Since $\partial^{(2k+1)}_t \hphidp (t,\beta) > 0$ for all $k\geq 1$, $t\geq 0$, $\beta>0$, $\mathcal{R}_{k+1}$  is an upper bound for $\hfdp$ (both considered as functions of $\beta$); see (\ref{GRError}). 
Summarizing, the new adaptive strategy to solve problem (\ref{GenP}) when $P(x(\alpha))$ is the discrepancy principle consists in applying the new modified Newton zero finder (\ref{newtonstepdiscr}) to (\ref{eq:discrmatrbeta}), which involves computing lower bounds $\mcG_k$ for $\hfdp$ at the $k$th iteration of Algorithm \ref{alg:new}; the discrepancy functional \linebreak[4]\mslnew{$\|b-A\widehat{x}_k(\beta_{k+1})\|^2-\eps^2=\| \bB_k\widehat{y}_k(\beta_{k+1})-\|b\|e_1\|^2-\eps^2$  for the} intermediate approximate solutions $\widehat{x}_k(\beta_{k+1})=x_k(1/\alpha_{k+1})$ of problem (\ref{tikh}) lays on upper bounds $\mcR_{k+1}$ for $\hfdp$. 
An illustration of the behavior of the 
bounds for the function $\hfdp$ (\ref{eq:discrcvx}) for the problem in (\ref{illustration}) is given in Figure \ref{fig:DPbounds}; this example is quite representative of the typical behavior found in other test problems. Please note that the functions $\hfdp(\beta)$, $\mcG_k(\beta)$, and $\mcR_{k+1}(\beta)$ do not look convex because of the logarithmic scale. 
\begin{figure}[htbp]
\centering
\includegraphics[width=5.5cm]{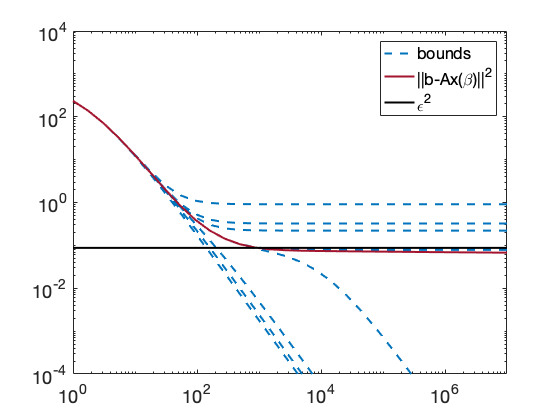} 
\caption{Values of the function $\|b-Ax(\beta)\|^2=\hfdp+\eps^2$ in (\ref{eq:discrcvx}), lower bounds $\mcG_k+\eps^2$ (\ref{eq:discrmatrbound}), and upper bounds $\mcR_{k+1}+\eps^2$ (\ref{eq:discrmatrboundup}) for $k=2,5,8,30$, versus $\beta$, for the problem in (\ref{illustration}); values  displayed in logarithmic scale.}
\label{fig:DPbounds}
\end{figure}

\paragraph{Stopping criteria for Algorithm \ref{alg:new}.}
Since the modified Newton method (\ref{newtonstepdiscr}) can essentially be regarded as a Newton-like update formula applied to a sequence of iteration-dependent converging functions, standard stopping criteria for Newton method can be adapted to this setting to determine both a value of the regularization parameter $\beta_{k+1}$ and the dimension of the approximation subspace for $\widehat{x}_k(\beta_{k+1})$. Typically, Algorithm \ref{alg:new} stops when the space $\Kry_k(A^TA,A^Tb)$ is large enough to contain a suitable approximation to the solution of (\ref{tikh}) and when a value of the regularization parameter suitable for the full dimensional problem (\ref{tikh}) has been computed: these requirements are interrelated and, as mentioned in Section \ref{sec:intro}, they are also desirable for hybrid methods; see \cite{Calvetti1999, Chung08, GNR15}. 

 

It is natural to stop Algorithm \ref{alg:new} as soon as 
\begin{equation}\label{eq:stop_th}
\hfdp(\beta_{k+1})= \frac{\|b-Ax(\beta_{k+1})\|^2-\eps^2}{\eps^2} \leq\tau\,,\quad\mbox{for a given tolerance $\tau>0$} \,.
\end{equation}  
However, computing $x(\beta_{k+1})$ would require solving the full-dimensional problem (\ref{tikh}) with $\alpha=1/\beta_{k+1}$, which may be prohibitively expensive for large-scale problems. Therefore, estimates for the numerator of the function on the left of (\ref{eq:stop_th}) should be considered. By taking the upper bound (\ref{eq:discrmatrboundup}), one can replace (\ref{eq:stop_th}) by
\begin{equation}\label{stopritDP}
 \mathcal{R}_{k+1}(\hphidp,AA^T,b,\beta_{k+1})\leq \tau \, \eps^2 \, .
\end{equation}
Satisfying (\ref{stopritDP}) implies satisfying (\ref{eq:stop_th}). Alternatively (and using reduced notations), an estimate of $\|b-Ax(\beta_{k+1})\|^2$ is obtained by averaging its upper (\ref{eq:discrmatrboundup}) and lower (\ref{eq:discrmatrbound}) bounds evaluated at $\beta_{k+1}$, and (\ref{eq:stop_th}) can be replaced by
\begin{equation}\label{stopritDPvar}
\frac{1}{2} \left( \mathcal{R}_{k+1}(\beta_{k+1})+\mathcal{G}_k(\beta_{k+1})\right)
\leq \tau \, \eps^2 \, .
\end{equation}

%

A different stooping criterion is devised by simultaneously monitoring the (relative) convergence of the sequence $\{\mathcal{G}_k \}_{k}$ to $\hfdp$ and the (relative) convergence of (\ref{newtonstepdiscr}) to the zero of $ \mathcal{G}_k(\beta)$, so that Algorithm \ref{alg:new} should be stopped as soon as 
\[
\frac{\|b-Ax(\beta_{k+1})\|^2 - \eps^2 - \mathcal{G}_k(\beta_{k+1})}{\|b-Ax(\beta_{k+1})\|^2-\eps^2}+\frac{\mathcal{G}_k(\beta_{k+1})}{ \eps^2} \leq \tau \,,
\quad\mbox{for a given tolerance $\tau>0$}\,.
\]
Analogously to (\ref{eq:stop_th}), to avoid excessive computations, the value of $\|b-Ax(\beta_{k+1})\|^2-\eps^2$ can be estimated by averaging its upper (\ref{eq:discrmatrboundup}) and lower (\ref{eq:discrmatrbound}) bounds evaluated at $\beta_{k+1}$, so that the following stopping rule is considered: 
\begin{equation}\label{stop_discr2}
\frac{\frac{1}{2}\left( \mathcal{R}_{k+1}(\beta_{k+1})+\mathcal{G}_k(\beta_{k+1})\right)- \mathcal{G}_k(\beta_{k+1})}{\frac{1}{2}\left( \mathcal{R}_{k+1}(\beta_{k+1})+\mathcal{G}_k(\beta_{k+1}) \right)}+\frac{\mathcal{G}_k(\beta_{k+1})}{ \eps^2} \leq \tol \, .
\end{equation}



\subsection{Other parameter rules}
This section explains how to approximate the solution of problem (\ref{GenP}) when considering the functionals (\ref{GCVtab}), (\ref{qotab}), and (\ref{Reginskatab}) defined in Table \ref{tab:rules}. The minimization procedure happens across the iterations of Algorithm \ref{alg:new} by applying a modified Newton method starting, in general, from an iteration $k\geq k^\ast$. Since the evaluation of $P(x(\alpha))$ at each iteration $k$ may be computationally prohibitive, one should employ a sequence of functionals $P_k(\alpha)$, which have a local minimum converging to a local minimum of $P(x(\alpha))$, but may not explicitly depend on the current approximate solution $x_k(\alpha)$. The functionals $P_k(\alpha)$ are obtained by projecting their full-dimensional counterparts onto Krylov subspaces of increasing dimension, or by approximating $P(x(\alpha))$ via Gaussian quadrature rules (see Section \ref{sec:Gauss}). For this reason, at the $k$th iteration of Algorithm \ref{alg:new}, the Krylov subspaces $\Kry_{k}(A^TA,A^Tb)$ and $\Kry_{k}(AA^T,b)$ are built, and 
the minimization step at line 4 of Algorithm \ref{alg:new} reads 
\begin{equation}\label{eq:modNewton_gen}
\alpha_{k+1}=\alpha_{k}-\frac{\partial_\alpha P_k(\alpha_k)}{\partial^{2}_{\alpha} P_k(\alpha_k)}\,,\quad\text{if}\quad k\geq k^\ast\,.
\end{equation}
Although the above relation is formally similar to (\ref{newtonstep2}), where $L_k=\partial_\alpha P_k$ and a zero finder is applied to $\partial_\alpha P(x(\alpha))=0$, 
applying (\ref{eq:modNewton_gen}) to the functionals (\ref{GCVtab}), (\ref{qotab}), and (\ref{Reginskatab}) 
is not as straightforward as in Section \ref{sec:discr}, for a variety of reasons: firstly, $\partial_\alpha P(x(\alpha))$ may have multiple zeros (corresponding to local maxima or minima of $P(x(\alpha))$; secondly, $\{\partial_\alpha P_k(\alpha)\}_k$ may not be nested upper or lower bounds for $\partial_\alpha P(x(\alpha))$; finally, some insight into how to choose $k^\ast$ is needed. Specific details are provided in the following subsections. 
%
Similarly to Section \ref{sec:discr}, an approximate solution $x_k(\alpha_{k+1})=V_ky_k(\alpha_{k+1})\in\Kry_k(A^TA,A^Tb)$ for problem (\ref{tikh}) can be computed by solving problem (\ref{hybridprojpb}) (with $\alpha_{k}=\alpha_{k+1}$, i.e., taking the most recent regularization parameter approximation from (\ref{eq:modNewton_gen})): even if, according to Algorithm \ref{alg:new}, this can be done only after a stopping criterion for $k$ is satisfied, $y_k(\alpha_{k+1})\in\R^k$ may be needed to devise an appropriate stopping criterion. 
If $k\ll\min\{n,m\}$, the computational cost of performing $k$ iterations of Algorithm \ref{alg:new} is dominated by the cost of performing $k$ GKB iterations, i.e., $O(2kmn)$ floating point operations. 
%

\subsubsection{Generalized cross validation (GCV)}\label{sec:GCV}
The functional associated to generalized cross validation (GCV) in Table \ref{tab:rules} can be expressed as 
\begin{equation}\label{GCValpha}
P(x(\alpha))=  \frac{b^T \phigcvone(AA^T,\alpha) b}{\text{trace}\left( \phigcvtwo(AA^T,\alpha) \right)^2} \,,\quad\mbox{where}\quad
\begin{array}{ccl}
\phigcvone(t,\alpha)&=&\alpha^2 (\alpha + t)^{-2} \\
\phigcvtwo(t,\alpha)&=&\alpha (\alpha + t)^{-1}
\end{array}\,,
\end{equation}
i.e., $P(x(\alpha))$ is the ratio of a quadratic form and the trace of a function of a matrix. Note that, since $P(x(\alpha))$ is typically quite flat around its minimum, minimizing (\ref{GCValpha}) with respect to $\alpha$ is challenging; 
see, e.g., \cite{Fenu16,NoRu14}. Because of this, when applying the modified Newton method (\ref{eq:modNewton_gen}), one should be careful in devising an appropriate sequence of $P_k(\alpha)$. Different approaches can be found in the literature for approximating $P(x(\alpha))$: while lower and upper bounds can be easily derived for its numerator using Gaussian quadrature rules (note that $\phigcvone(t,\alpha)=\phidp(t,\alpha)+\bar{\eps}^2$; see (\ref{eqn:psi})), finding an approximation for the trace in the denominator is a well-studied but difficult task. One could, for instance, use random estimators for the trace; see, e.g., \cite{VonMatt,randtrace}. A method for computing bounds for $P(x(\alpha))$ based on multiple runs of the so-called global Golub-Kahan algorithm is presented in \cite{Fenu16}. 
When performing hybrid methods, i.e., when solving a sequence of problems (\ref{GenPproj}), it is common to take as denominator of the functional $P_k(x_k(\alpha))$ the square of the quantity 
\begin{equation}\label{GCVhybrid}
(m-k) + \text{trace}\left( \phigcvtwo(\bB_k\bB_k^T,\alpha) \right)\,, 
\end{equation}
where $\bB_k$ is defined in (\ref{eq:Lanczos1}) 
(basically, $\left(\text{trace}(I - U_{k+1}\bB_k(\bB_k^T\bB_k+\alpha I)^{-1}\bB_k^TU_{k+1}^T)\right)^2$ is considered at the denominator of (\ref{GCVtab}) in Table \ref{tab:rules}; see \cite{Chung08,NoRu14,RenautHybrid}). 

In this paper, the following functional
\begin{equation} \label{eqn:GCVtrace}
P_k(\alpha)= \frac{\|b \|\sg{^2} \phigcvone(\bC_k \bC_k^T,\alpha)}{\text{trace}\left( \phigcvtwo(\bC_k \bC_k^T,\alpha) \right)^2}=\frac{\mathcal{R}_{k+1}(\phigcvone, A A^T, b, \alpha)}{\text{trace}\left( \phigcvtwo(\bC_k \bC_k^T,\alpha) \right)^2}
\end{equation}
is considered at the $k$th iteration of Algorithm \ref{alg:new} as an approximation to (\ref{GCValpha}). Basically, decreasing upper bounds are considered for the numerator of (\ref{GCValpha}) using Gauss-Radau quadrature rules (see Section \ref{sec:discr}). The denominator is heavily 
under-estimated by squaring the trace of the matrix function $\phigcvtwo$ evaluated at $\bC_k \bC_k^T$ (i.e., by taking the same approximation (\ref{GCVhybrid}) used for hybrid methods, without the $(m-k)$ term); under some assumptions on $\alpha$ and the entries of $\bC_k$ (which are typically satisfied for ill-posed problems), the denominator increases with $k$. 
%
%
%
%
%
%
%
%
Because of the loose trace estimator, the sequence $\{P_k(\alpha)\}_{k}$ defined in (\ref{eqn:GCVtrace}) is not required to converge to $P(x(\alpha))$ within the performed number of iterations. An illustration of the behavior of 
the GCV functional (\ref{GCValpha}) together with its upper bounds (\ref{eqn:GCVtrace}) for the problem in (\ref{illustration}) is given in Figure \ref{fig:approxGCV}. Note that, even after 225 iterations, the bound $P_{225}(\alpha)$ is very different from $P(x(\alpha))$; this example is quite representative of the typical behavior found in other test problems. 
\begin{figure}[htbp]
\centering
\includegraphics[width=5.5cm]{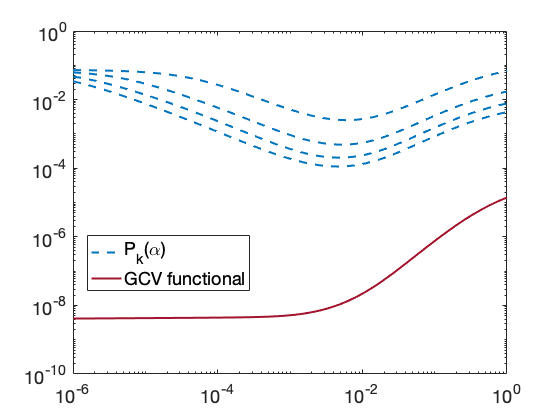} 
\caption{Values of the GCV functional (\ref{GCValpha}) and upper bounds (\ref{eqn:GCVtrace}) for $k=1,75,150, 225$, versus $\alpha$, for the problem in (\ref{illustration}); values displayed in logarithmic scale.}
\label{fig:approxGCV}
\end{figure}
When $P_k(\alpha)$ is employed as higher-level objective function in (\ref{GenPproj}), i.e., for hybrid methods, 
considering estimate (\ref{eqn:GCVtrace}) can lead to oversmoothed approximate solutions $x_k(\alpha)$; see \cite{Chung08,RenautHybrid}. However, in the framework of Algorithm \ref{alg:new}, considering (\ref{eqn:GCVtrace}) is convenient because each $P_k(\alpha)$ is less flat around its (local) minimum (and therefore less challenging to minimize). Indeed, by monitoring the stabilization of the parameter $\alpha_{k+1}$ selected by (\ref{eq:modNewton_gen}) across the iterations of Algorithm \ref{alg:new}, one can make sure that the location of the approximate (local) minimum of $P_k(\alpha)$ stabilizes for the subsequent functionals (\ref{eqn:GCVtrace}), which is a necessary (but not sufficient) condition for $\alpha_k$ to belong to a neighborhood of $\text{arg}\min_{\alpha} P(x(\alpha))$; see Section~\ref{subsec:stop} and Section~\ref{sec:numerics}. As suggested in \cite{VonMatt}, the choice $k^\ast=\lceil 3\log(\min\{m,n\}) \rceil$, where $\lceil\cdot\rceil$ denotes the ceiling function, is made in (\ref{eq:modNewton_gen}) to allow the \mslnew{approximations} (\ref{eqn:GCVtrace}) to slightly stabilize (especially for $\alpha$ small) before applying the modified Newton method.

\subsubsection{Quasi-optimality criterion}
The functional associated to the quasi-optimality criterion (\ref{qotab}) in Table \ref{tab:rules} can be expressed in terms of a quadratic form as
\begin{equation}\label{eq:minQO}
P(x(\alpha))= (A^T b)^T \phiqo(A^T A,\alpha) A^T b \,,\quad\mbox{where}\quad\phiqo(t,\alpha)=\alpha^2(\alpha + t)^{-4}.
\end{equation}
Since $\partial^{(2k-1)}_t \phiqo(t,\alpha) < 0 $ for $k\geq 1$, $t \geq 0$, $\alpha > 0$, Gauss-Radau quadrature rules can be used to compute upper bounds 
for $P(x(\alpha))$ in (\ref{eq:minQO}). Namely, the quadratic forms
\begin{equation}\label{qoup}
\mcR_k(\alpha):=\mathcal{R}_k ( \phiqo, A^TA, A^T b, \alpha) = \|A^T b \|^2 e^T_1 \phiqo(\hbC_{k-1} \hbC_{k-1}^T,\alpha) e_1
\end{equation} 
are such that 
$\mcR_{k+1}(\alpha)\leq \mcR_k(\alpha)$ for all $k\geq 2$ and $\alpha>0$ (see \cite[Theorem 2.2]{Lopez08}), and $\mcR_k(\alpha)=P(x(\alpha))$ for $k\geq \min\{n,m\}$. 
Similarly, since $\partial^{(2k)}_t \phiqo(t,\alpha) > 0 $ for all $k\geq 1$, $t \geq 0$, $\alpha > 0$, Gauss quadrature rules can be used to compute lower bounds for $P(x(\alpha))$ in (\ref{eq:minQO}). Namely, the quadratic forms
\begin{equation}\label{qolow}
\mcG_k(\alpha):=\mathcal{G}_k ( \phiqo, A^TA, A^T b, \alpha) = \|A^T b \|^2 e^T_1 \phiqo(\hC_k \hC_k^T,\alpha) e_1
\end{equation} 
are such that 
$\mcG_{k+1}(\alpha)\geq \mcG_k(\alpha)$ for all $\alpha>0$ (see (\ref{GQ2}) and \cite[Theorem 2.2]{Lopez08}), and $\mcG_k(\alpha)=P(x(\alpha))$ for $k\geq \min\{n,m\}$. An illustration of the behavior of 
the quasi-optimality functional (\ref{eq:minQO}) together with its upper bounds (\ref{qoup}) and lower bounds (\ref{qolow}) for the problem in (\ref{illustration}) is given in Figure \ref{fig:QObounds}; this example is quite representative of the typical behavior found in other test problems. 
 \begin{figure}[htbp]
\centering
\includegraphics[width=5.5cm]{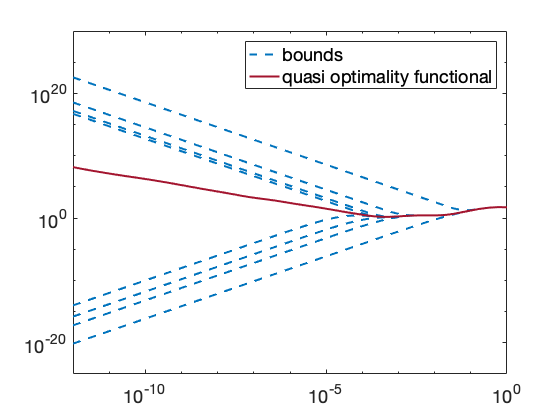} 
\caption{Values of the quasi optimality functional (\ref{eq:minQO}), and some of its lower bounds (\ref{qolow}) and upper bounds (\ref{qoup}) for $k=5,35,65,95$, versus $\alpha$, for the problem in (\ref{illustration}); values displayed in logarithmic scale.}
\label{fig:QObounds}
\end{figure}
Looking at Figure \ref{fig:QObounds}, it is evident that the functional in (\ref{eq:minQO}) is nonconvex and quite flat around its local minima, so that computing its global minimizer can be difficult. However, its upper bounds display a much more favorable behavior when it comes to optimization: for small $k$ and for small $\alpha$ the functionals in (\ref{qoup}) are monotonically and quickly decreasing. For this reason, at the $k$th iteration of Algorithm \ref{alg:new}, the choice $P_k(\alpha)=\mcR_k(\alpha)$ is made. The modified Newton method (\ref{eq:modNewton_gen}) rapidly leads to the computation of local minima for each $P_k(\alpha)$, and therefore of a local minimum for $P(x(\alpha))$ (see also Section \ref{sec:numerics}). Note that, since $P_k(\alpha)$ is defined for $k\geq 2$ (see (\ref{GRQ2})), it is natural to select $k^\ast=2$ in (\ref{eq:modNewton_gen}). The lower bounds (\ref{qolow}) for $P(x(\alpha))$ are typically very flat (recall that the graphs in Figure \ref{fig:QObounds} are displayed in logarithmic scale), and they can be used to devise suitable stopping criteria for Algorithm \ref{alg:new} (see Section \ref{subsec:stop}, where the notation $P^L_k(\alpha)=\mathcal{G}_k (\alpha)$ is used). 
%
%

\subsubsection{Regi\'{n}ska criterion}
The functional associated to the Regi\'{n}ska criterion (\ref{Reginskatab}) in Table \ref{tab:rules} can be expressed in terms of quadratic forms as
\begin{equation}\label{eq:minR}
P(x(\alpha)) = \sqrt{b^T\phiR(AA^T,\alpha)b} \sqrt{(A^T b)^T\phiR(A^T A,\alpha)A^T b}, \quad \mbox{where}\quad \phiR(t,\alpha)=\alpha(\alpha + t)^{-2}. 
\end{equation}
Since $\partial^{(2k-1)}_t \phiR(t,\alpha) < 0 $ for $k\geq 1$, $t \geq 0$, $\alpha > 0$, Gauss-Radau quadrature rules can be used to compute a sequence of increasingly sharper upper bounds for $b^T\phiR(AA^T,\alpha)b$ and $(A^T b)^T\phiR(A^T A,\alpha)A^T b$. Namely, taking
\begin{eqnarray}
\widetilde{\mcR}_k(\alpha) &:=&  \sqrt{\mathcal{R}_{k+1}(\phiR, A A^T, b,\alpha)} \sqrt{\mathcal{R}_{k}(\phiR, A^T A, A^T b, \alpha)} \nonumber
 \\ 
&=& \|A^T b \| \| b \| \sqrt{e^T_1 \phiR(\bC_k \bC_k^T,\alpha) e_1} \sqrt{e^T_1 \phiR(\hbC_{k-1} \hbC_{k-1}^T,\alpha) e_1}  \label{rup}
\end{eqnarray}
and knowing that both $\mathcal{R}_{k+1}(\phiR, A A^T, b,\alpha)$ and $\mathcal{R}_{k+1}(\phiR, A^T A, A^T b, \alpha)$ decrease with  increasing $k\geq 2$, 
one gets $\widetilde{\mcR}_{k+1}(\alpha)\leq \widetilde{\mcR}_{k}(\alpha)$ (see Section \ref{sec:discr} and equations (\ref{GRQ1}), (\ref{GRQ2})). Similarly, since \linebreak[4]
$\partial^{(2k)}_t \phiR(t,\alpha)>0 $ for all $k\geq 1$, $t \geq 0$, $\alpha > 0$, Gauss quadrature rules can be used to compute lower bounds for $P(x(\alpha))$ in (\ref{eq:minR}). Namely, taking
\begin{eqnarray}
\widetilde{\mcG}_k(\alpha) &:=&  \sqrt{\mathcal{G}_k(\phiR, A A^T, b,\alpha)} \sqrt{\mathcal{G}_k(\phiR, A^T A, A^T b, \alpha)} \nonumber \\ 
&=& \|A^T b \| \| b \| \sqrt{e^T_1 \phiR(B_k B_k^T,\alpha) e_1}  \sqrt{e^T_1 \phiR(\hC_k \hC_k^T,\alpha) e_1} \label{rlow}
\end{eqnarray}
and knowing that 
\[
\mathcal{G}_{k+1}(\phiR, A A^T, b,\alpha)\geq\mathcal{G}_{k}(\phiR, A A^T, b,\alpha)\mbox{ and }\mathcal{G}_{k+1}(\phiR, A^T A, A^T b, \alpha)\geq\mathcal{G}_{k}(\phiR, A^T A, A^T b, \alpha)\,,
\]
one gets $\widetilde{\mcG}_{k+1}(\alpha)\geq \widetilde{\mcG}_{k}(\alpha)$ (see equations (\ref{GQ1}) and (\ref{GQ2})). 
An illustration of the behavior of the Regi\'{n}ska's functional (\ref{eq:minR}) together with its upper bounds (\ref{rup}) and lower bounds (\ref{rlow}) for the problem in (\ref{illustration}) is given in Figure \ref{fig:Rbounds}; this example is quite representative of the typical behavior found in other test problems. 
\begin{figure}[htbp]
\centering
\includegraphics[width=5.5cm]{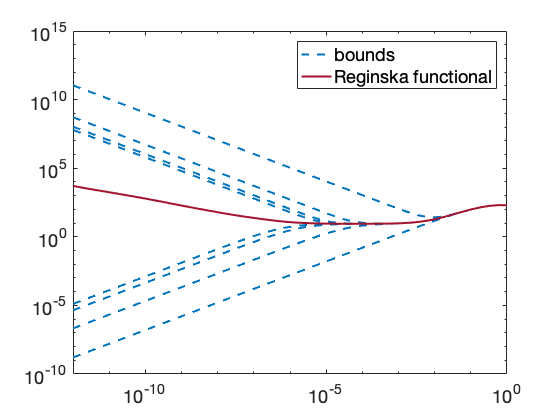} 
\caption{Values of the Regi\'{n}ska's functional (\ref{eq:minR}), and some of its lower bounds (\ref{rlow}) and upper bounds (\ref{rup}) for $k=5,35,65,95$, versus $\alpha$, for the problem in (\ref{illustration}); values displayed in logarithmic scale.}
\label{fig:Rbounds}
\end{figure}
Analogously to the quasi-optimality case, looking at Figure \ref{fig:Rbounds} it is evident that the functional in (\ref{eq:minR}) is quite difficult to minimize numerically because it is quite flat around its miniumum. Since its upper bounds (\ref{rup}) are less flat around its minima for small values of $k$, at the $k$th iteration of Algorithm \ref{alg:new} the choice $P_k(\alpha)=\widetilde{\mcR}_k(\alpha)$ is made. The modified Newton method (\ref{eq:modNewton_gen}) rapidly leads to the computation of a minimum for $P(x(\alpha))$ (see also Section \ref{sec:numerics}). Note that, since $P_k(\alpha)$ is defined for $k\geq 2$ (see (\ref{GRQ2})), it is natural to select $k^\ast=2$ in (\ref{eq:modNewton_gen}). The lower bounds (\ref{rlow}) for $P(x(\alpha))$ are typically very flat and they can be used to devise suitable stopping criteria for Algorithm \ref{alg:new} (see Section \ref{subsec:stop}, where the notation $P^L_k(\alpha)=\widetilde{\mathcal{G}}_k (\alpha)$ is used). 

\subsubsection{Stopping criteria for Algorithm \ref{alg:new}}\label{subsec:stop}
Similarly to Section \ref{sec:discr}, traditional stopping criteria for (standard) Newton method applied to $\partial_{\alpha}P(x(\alpha))=0$ can be adapted to the modified Newton method (\ref{eq:modNewton_gen}). Namely, one could stop the iterations as soon as the following condition on the relative residual is satisfied
\[ 
\frac{|\partial_{\alpha} P(x(\alpha_{k+1}))|}{|P(x(\alpha_{k+1}))|} < \tol\,,\quad\mbox{for a given tolerance $\tau>0$} \,.
\]
However, computing $x(\alpha_{k+1})$ would require solving the full-dimensional problem (\ref{tikh}) for \linebreak[4]$\alpha=\alpha_{k+1}$, which could be computationally infeasible for large-scale problems. 

Moreover, when applying the modified Newton method (\ref{eq:modNewton_gen}) to compute an approximate solution of (\ref{GenP}), 
one should at least jointly  monitor the stabilization 
of the parameter $\alpha_{k+1}$ and the convergence of $\alpha_{k+1}$ to a zero of $P_k(\alpha)$. Namely the iterations are stopped as soon as
\begin{equation} \label{eqn:sc1}
\frac{|\alpha_{k+1}-\alpha_{k}|}{\frac{1}{2}|\alpha_{k+1}+\alpha_{k}|}+\frac{|\partial_{\alpha} P_k(\alpha_{k+1})|}{|P_k(\alpha_{k+1})|} < \tol\,,\quad\mbox{for a given tolerance $\tau>0$} \,. 
\end{equation}
The first term on the left-hand side of (\ref{eqn:sc1}) is the relative change in two consecutively computed values of $\alpha$ (the quantity at the denominator is their average), while the second term measures the relative residual of the approximation $\partial_\alpha P_k(\alpha)$ evaluated in $\alpha_{k+1}$ (i.e., the distance of $\partial_\alpha P_k(\alpha_{k+1})$ to $0$). 

Finally, when $\{P_k(\alpha)\}_k$ are approximations of $P(x(\alpha))$ of improving quality (i.e., when $P_k(\alpha)$ becomes closer to $P(x(\alpha))$ as $k$ increases), one can also monitor the convergence of $P_k(\alpha_{k+1})$ to $P(x(\alpha_{k+1}))$ (i.e., the convergence of the approximate functionals to the full-dimensional one at the current approximation of $\alpha$) together with the convergence of $\alpha_{k+1}$ to a zero of $P_k(\alpha)$. This should happen when dealing with the quasi-optimality and Regi\'{n}ska criteria, although it may not happen when dealing with GCV (see Section \ref{sec:GCV}, where the case for considering as $\{P_k(\alpha)\}_k$ coarse over-estimations of $P(x(\alpha))$ is made). In these cases, the iterations are stopped as soon as
\begin{equation} \label{eqn:sc2}
\frac{|P_{k}(\alpha_{k+1})-\bar{P}_k(\alpha_{k+1})|}{|\bar{P}_k(\alpha_{k+1})|}+\frac{|\partial_{\alpha} P_k(\alpha_{k+1})|}{|P_k(\alpha_{k+1})|} < \tol\,,\quad\mbox{for a given tolerance $\tau>0$} \,,
\end{equation}
where
\begin{equation}\label{barP}
\bar{P}_k(\alpha_{k+1})=\frac{1}{2}\left(P_{k}(\alpha_{k+1})+P_k^{L}(\alpha_{k+1})\right)\approx P(x(\alpha_{k+1}))\,.
\end{equation}
Recall that, for the quasi-optimality and Regi\'{n}ska criteria: $P_k(\alpha) = \mcR_k(\alpha)$ (see (\ref{qoup})) and $P_k(\alpha) = \widetilde{\mcR}_k(\alpha)$ (see (\ref{rup})), respectively (both of them are upper bounds for \linebreak[4]$P(x(\alpha))$); $P^L_k(\alpha) = \mcG_k(\alpha)$ (see (\ref{qolow})) and $P^L_k(\alpha) = \widetilde{\mcG}_k(\alpha)$ (see (\ref{rlow})), respectively (both of them are lower bounds for $P(x(\alpha)))$. The quantity (\ref{barP}), i.e., the average of two approximations of $P(x(\alpha_{k+1}))$ is used to avoid computing $P(x(\alpha_{k+1}))$ itself, which may be demanding.

\section{Numerical Experiments}\label{sec:numerics}
This section investigates the performance of the proposed adaptive regularization parameter choice rules on two large-scale test problems modeling imaging applications. All the experiments are performed running MATLAB R2017a and using some of the functionalities available within the MATLAB toolbox \emph{IR Tools}  \cite{IRtools}. The behavior of the approximate solution of problem (\ref{GenP}) for a range of regularization parameters $\alpha$ and dimensions $k$ of the Krylov subspaces can be monitored by checking the values of the relative restoration error 
\begin{equation}\label{relerr}
\RRE(\alpha,k)= \frac{\| x_k(\alpha)-x^{\text{ex}}\|}{\|x^{\text{ex}}\|}\,,
\end{equation}
where $x^{\text{ex}}$ is the exact solution of the noise-free version of problem (\ref{eq:linsys}) (i.e., $e=0$). This can be conveniently visualized by means of three-dimensional representations, where some sampled values of $\alpha$ and $k$ are reported on the $x$ and $y$ axes, respectively, and the corresponding values of $\RRE(\alpha,k)$ are reported on the $z$ axis. These plots are dubbed \emph{error surfaces}: points laying on the \emph{error surfaces} have coordinates $(\bar{\alpha},\bar{k},\RRE(\bar{\alpha},\bar{k}))$, where $\bar{\alpha}$ and $\bar{k}$ are sampled values of $\alpha$ and $k$, respectively. Similarly, the behavior of the functionals used in line 4 of Algorithm \ref{alg:new} to approximate the higher-level objective function $P(x(\alpha))$ in (\ref{GenP}) can be plotted against sampled values of $\alpha$ and $k$, giving rise to so-called \emph{higher-level surfaces}. Consistently with traditional representations, the points laying on the \emph{higher-level surfaces} associated to the GCV, the quasi-optimality and Regi\'{n}ska criteria have coordinates $(\bar{\alpha},\bar{k},P_{\bar{k}}(\bar{\alpha}))$. The points laying on the  \emph{higher-level surfaces} associated to the discrepancy principle have coordinates $(\bar{\alpha},\bar{k},\|b-Ax_{\bar{k}}(\bar{\alpha})\|^2)$: note that the values on the $z$-axis do not approximate the functional $P(x(\alpha))$ in Table \ref{tab:rules}, and they correspond to $\mcR_{\bar{k}+1}(\phidp,AA^T,b,\bar{\alpha}) + \eps^2$ (while, at line 4 of Algorithm \ref{alg:new}, the functional $\mathcal{G}_k(\hphidp, AA^T,b, 1/\alpha)$ is used; see Corollary \ref{cor1}). It is important to remark that the computation of \emph{higher-level surfaces} merely has illustrative purposes: the $k$th iteration of Algorithm \ref{alg:new} requires evaluating the approximated higher-level functionals in a given $\alpha$ only twice (see recursions (\ref{newtonstepdiscr}) and (\ref{eq:modNewton_gen})), until a stopping criterion is satisfied (the stopping criterion may require one or two extra evaluations of the approximated functionals). 

Notation-wise, in this section the value of the regularization parameter $\alpha$ computed at the $k$th step of an iterative procedure is denoted by $\alpha_k$: note that, in the framework of Algorithm \ref{alg:new}, this was denoted by $\alpha_{k+1}$ (see (\ref{newtonstepdiscr}), where $\alpha_{k+1}=1/\beta_{k+1}$, and (\ref{eq:modNewton_gen})), while in the framework of Algorithm \ref{alg:A2} this was denoted by $\alpha_k=\alpha_{k(\ell)}$. 

Algorithm \ref{alg:new} needs to be initialized by selecting the first value of the regularization parameter $\alpha_1$. In the discrepancy principle case, it is important that the initial $\beta_1=1/\alpha_1$ is such that $\mcG_1(\beta_1)\geq 0$, to satisfy the assumptions of Corollary \ref{cor1} and guarantee convergence: for this reason the value $\beta_1=10^{-10}$ is considered in the following experiments; also, $\eps=1.01\cdot\|e\|$ is set. For the other parameter rules, the initial value of $\alpha_1$ is less critical, since no clear convergence theory has been established: the value $\alpha_1=10^{-10}$ is considered in the following experiments. The tolerance employed for all the stopping criteria is $\tol=10^{-2}$. 
%
%

\paragraph{Example 1.}  An image deblurring test problem involving a \emph{satellite} test image of size 256$\times$256 pixels, a medium Gaussian blur, and Gaussian white noise level $\|e\|/\|b\|=10^{-2}$ is generated using the following instructions within \emph{IR Tools}:
\[
\text{\texttt{[A,b,x] = PRblur(256); bn = PRnoise(b);}}
\]
The coefficient matrix $A$ has order $65536$. Figure \ref{fig:DEBLURRING_graphs} evaluates the performance of Algorithm \ref{alg:new} by comparing the value of the regularization parameter $\alpha_k$ and the relative restoration error $\RRE(\alpha_k,k)$ 
computed at each iteration $k$, to the optimal ones and to the ones obtained running traditional hybrid methods (Algorithm \ref{alg:A2}); all the parameter choice rules listed in Table \ref{tab:rules} are tested. The optimal $\alpha_k$ is the one that minimizes $\RRE(\alpha,k)$ at each (fixed) iteration $k$, among a set of sampled $\alpha$. For all the considered parameter choice rules, the values of the parameter $\alpha_k$ computed by Algorithm \ref{alg:new} clearly and efficiently converge to the values obtained by applying hybrid methods. 
Even if some of the considered strategies seem to deliver relative reconstruction errors that are closer to the optimal ones, comparing the performance of each parameter choice rule with respect to the optimal one is beyond the scope of this paper.  
For most of the considered parameter choice rules, the stopping criteria proposed in Section \ref{sec:new} succeed in stopping the iterations of Algorithm \ref{alg:new} once a good regularization parameter is computed (i.e., when the couple $(\alpha,k)$ coincides with one that can be selected by a traditional hybrid method, while being quite close to the optimal one). Even when a stopping criterion seems to fail (meaning that no stopping happens within the maximum number of performed iterations or, looking a posteriori at the plots of $\alpha_k$ and $\RRE(\alpha_k,k)$ versus $k$, the iterations could have been stopped earlier), the quality of the approximate solution does not deteriorate with respect to the one achieved at a more computationally convenient stopping point (see, e.g, frames (b) and (d) of Figure \ref{fig:DEBLURRING_graphs}).
%
Also, it is evident that both hybrid methods and Algorithm \ref{alg:new} over-regularize the solution during their early iterations (i.e., they select a Tikhonov parameter that is much larger than the optimal one). 
%
%
%
\begin{table}
\centering
\caption{Summary of the markers denoting different stopping criteria for Algorithm \ref{alg:new}.}
\label{tab:markers} 
\footnotesize
\begin{tabular}{|cc|cc|cc|cc|cc|cc|}\hline
\multicolumn{6}{|c|}{\textbf{discrepancy principle}} & \multicolumn{2}{c|}{\textbf{GCV}} & \multicolumn{4}{c|}{\textbf{other rules}}\\\hline
(\ref{stopritDP}) & $\circ$ & (\ref{stopritDPvar}) & $\times$ & (\ref{stop_discr2}) & $\diamond$ & (\ref{eqn:sc1}) & $\circ$ & (\ref{eqn:sc1}) & $\circ$ & (\ref{eqn:sc2}) & 
$\diamond$\\ \hline
\end{tabular} 
\end{table}
%
%
\begin{figure}[htbp]
\centering
\begin{tabular}{cccc}
\vspace{0.2cm}
{\small {(1a)}} & {\small {(1b)}} & {\small {(1c)}} & {\small {(1d)}} \\ 
\includegraphics[width=3cm]{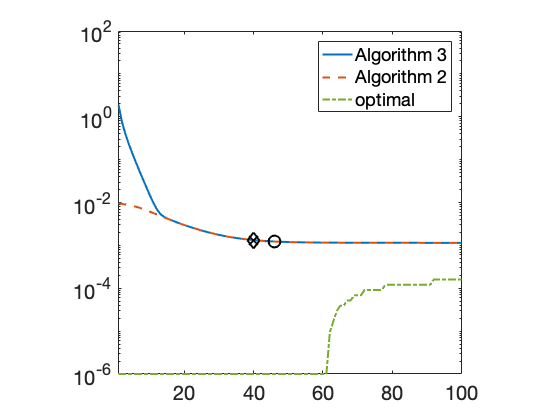} & 
\includegraphics[width=3cm]{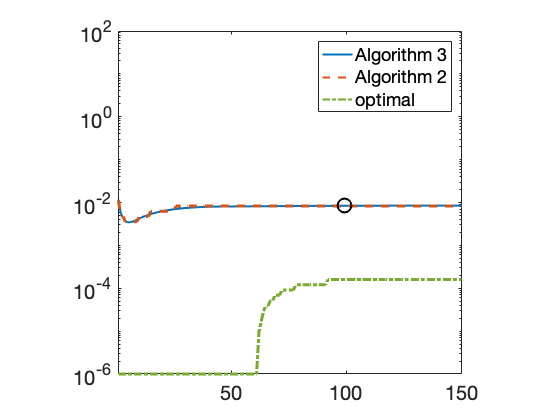} & 
\includegraphics[width=3cm]{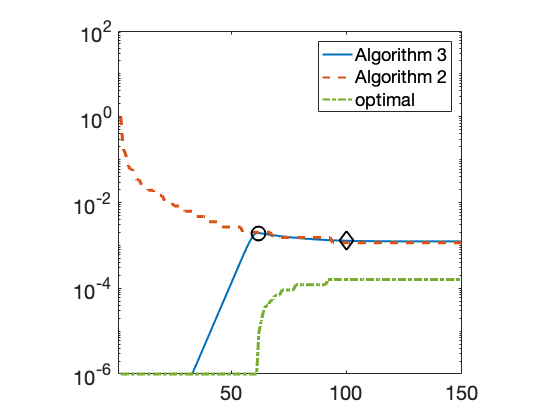}
&
\includegraphics[width=3cm]{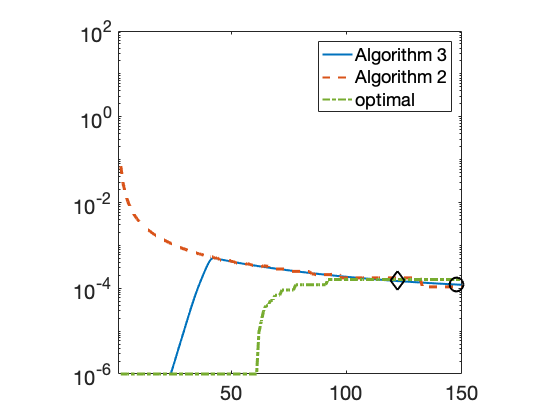} \\
{\small {(2a)}} & {\small {(2b)}} & {\small {(2c)}} & {\small {(2d)}} \\ 
\includegraphics[width=3cm]{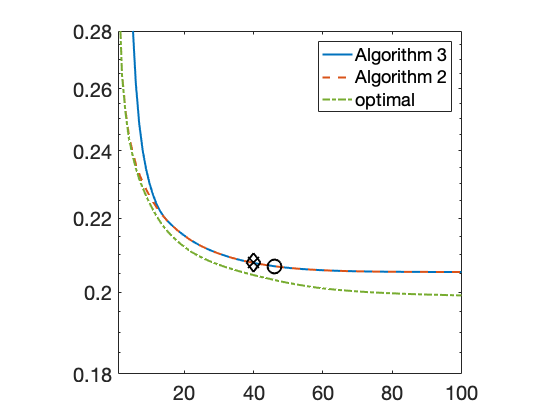} & 
\includegraphics[width=3cm]{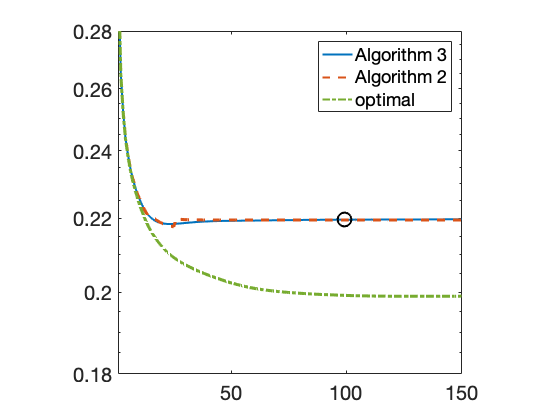} & 
\includegraphics[width=3cm]{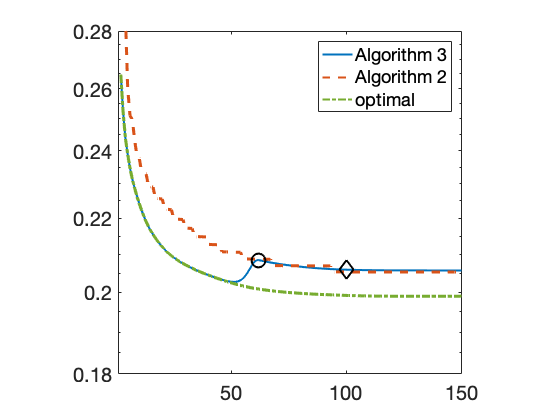}
&
\includegraphics[width=3cm]{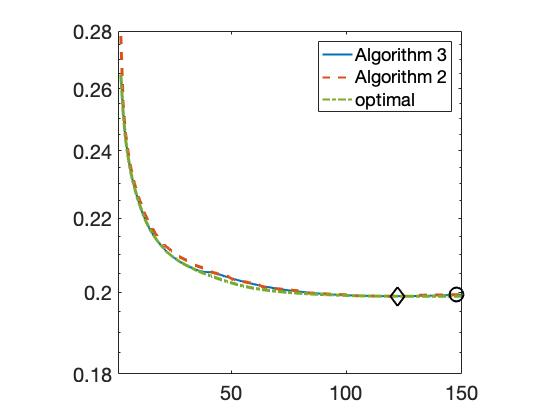}
\end{tabular}
\caption{Example 1. First row: values of the regularization parameter $\alpha_k$ versus number of iterations $k$. Second row: values of the relative restoration errors $\RRE(\alpha_k,k)$ versus number of iterations $k$. The following parameter choice rules are considered: (a) discrepancy principle; (b) GCV; (c) quasi-optimality criterion; (d) Regi\'{n}ska criterion. Special markers highlight the stopping iterations (with the conventions explained in Table \ref{tab:markers}).} \label{fig:DEBLURRING_graphs}
\end{figure}
Figure \ref{fig:DUBLURRING_Valleys} displays the \emph{higher-level surface} for this test problem and for the parameter choice rules listed in Table \ref{tab:rules}. 
Special markers are used to highlight the values corresponding to the couples $(\alpha_k,k)$ computed by Algorithms \ref{alg:A2} and \ref{alg:new}, and the optimal values. Further to Figure \ref{fig:DEBLURRING_graphs}, Figure \ref{fig:DUBLURRING_Valleys} displays how the computed approximations of the higher-level functionals $P(x(\alpha))$ listed in Table \ref{tab:rules} (or, in the discrepancy principle case, of its derivative with respect to $\alpha$) vary with respect to both $\alpha$ and $k$: for instance, it is evident that the shape of these surfaces stabilizes already for values of $k$ that are typically very small with respect to the dimension of the full-size problem. 
%
\begin{figure}[htbp]
\centering
\begin{tabular}{cc}
\vspace{0.2cm}
{\small {(a)}} & {\small {(b)}}\\ 
\includegraphics[width=5.5cm]{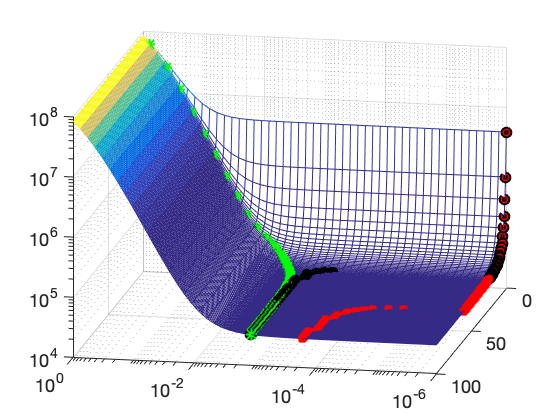} & 
\includegraphics[width=5.5cm]{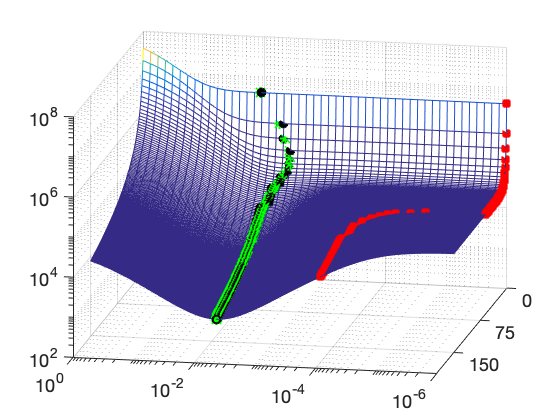} \\
{\small {(c)}} & {\small {(d)}}\\ 
\includegraphics[width=5.5cm]{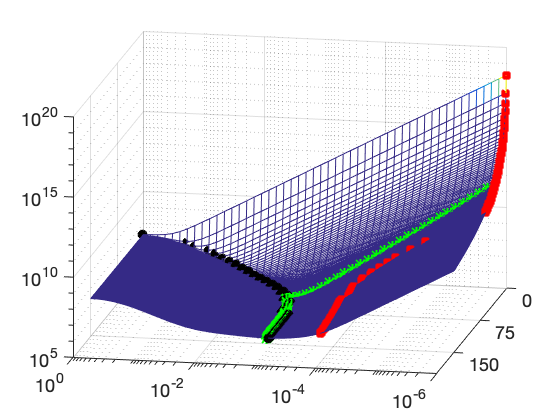} & 
\includegraphics[width=5.5cm]{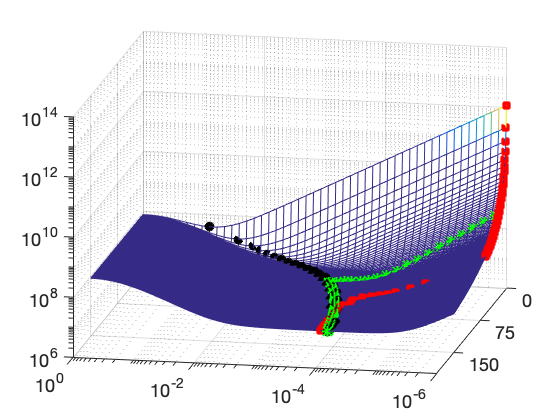}
\end{tabular}
\caption{\emph{Higher-level surface} for Example 1. 50 logarithmically equispaced values of $\alpha$ between $10^{-6}$ and $1$ and increasing values of $k$ up to 150 are considered. (a) Discrepancy principle (i.e., $\|b-Ax_k(\alpha)\|^2$); (b) GCV (i.e., upper bounds $P_k(\alpha)$ in (\ref{eqn:GCVtrace})); (c) quasi-optimality criterion (i.e., upper bounds $P_k(\alpha)$ in (\ref{qoup})); (d) Regi\'{n}ska criterion (i.e., upper bounds $P_k(\alpha)$ in (\ref{rup})). The green stars and black circles highlight the quantities computed by Algorithm \ref{alg:new} and Algorithm \ref{alg:A2}, respectively; the red squares denote the optimal quantities.} \label{fig:DUBLURRING_Valleys}
\end{figure}
Figure \ref{fig:DUBLURRING_OptimalValley} displays the \emph{error surface} for this test problem. 
The optimal value of the regularization parameters at each iteration is highlighted by a special marker. In agreement with Figure \ref{fig:DEBLURRING_graphs}, one can clearly see that the optimal regularization parameter is tiny when $k$ is small (as explained in \cite{Chung08}, this is due to the inherent regularizing effect of the GKB algorithm); also, once the Krylov subspace has reached a certain dimension (which, again, is typically very small compared to the dimension of the full-size problem), the optimal value of the regularization parameter stabilizes across subsequent iterations. Further to Figure \ref{fig:DEBLURRING_graphs}, Figure \ref{fig:DUBLURRING_OptimalValley} displays how $\RRE(\alpha,k)$ varies with $\alpha$ for a fixed $k$. 
\begin{figure}[htbp]
\centering
\begin{tabular}{cc}
\vspace{0.2cm}
{\small {(a)}} & {\small {(b)}}\\ 
\includegraphics[width=5.5cm]{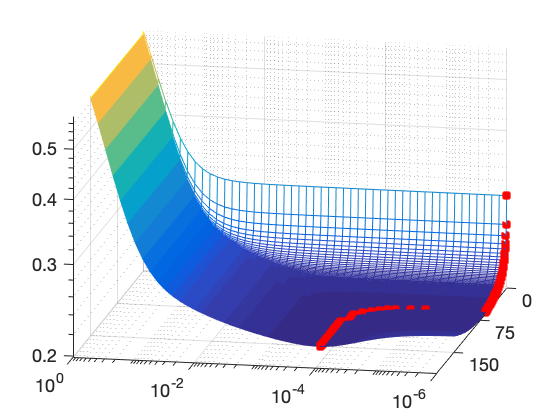} & 
\includegraphics[width=5.5cm]{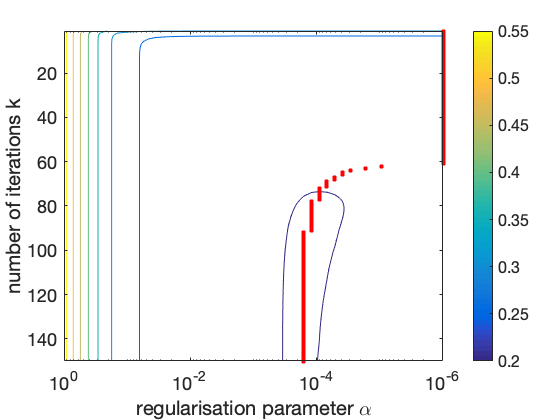}
\end{tabular}
\caption{\emph{Error surface} for Example 1. 50 logarithmically equispaced values of $\alpha$ between $10^{-6}$ and $1$ and $k=1,...,150$ are sampled. The markers highlight the combinations of the sampled values of $\alpha$ and $k$ leading the minimal relative reconstruction errors, for all $k=1,...,150$.}
\label{fig:DUBLURRING_OptimalValley}
\end{figure}

\paragraph{Example 2} 
A test problem modelling X-ray tomography involving the \emph{Shepp-Logan} phantom of size $256\times 256$ pixels, acquired through a parallel beam geometry consisting of 362 equidistant parallel beams rotated around 224 equidistant angles between $1^{\circ}$ and $180^{\circ}$ is considered. Gaussian white noise of level $\|e\|/\|b\|=10^{-2}$ is added to the measurements. This test problem is generated using the following instructions within \emph{IR Tools}:
\[
\text{\texttt{optn.angles = 1:0.8:180; [A,b,x] = PRtomo(256, optn); bn = PRnoise(b);}}
\]
The overdetermined coefficient matrix $A$ so computed has size $81088\times 65536$. Similarly to Example 1, the performance of Algorithm \ref{alg:new} is evaluated by comparing the relative restoration error $\RRE(\alpha_k,k)$ and the value of the regularization parameter $\alpha_k$ 
(computed at each iteration $k$) to the optimal ones and to the ones obtained running traditional hybrid methods (Algorithm \ref{alg:A2}); all the parameter choice rules listed in Table \ref{tab:rules} are tested. Graphs showing these comparisons are displayed in Figure \ref{fig:TOMO_graphs}. 
\begin{figure}[ht!]
\centering
\begin{tabular}{cccc}
\vspace{0.2cm}
{\small {(1a)}} & {\small {(1b)}} & {\small {(1c)}} & {\small {(1d)}} \\ 
\includegraphics[width=3cm]{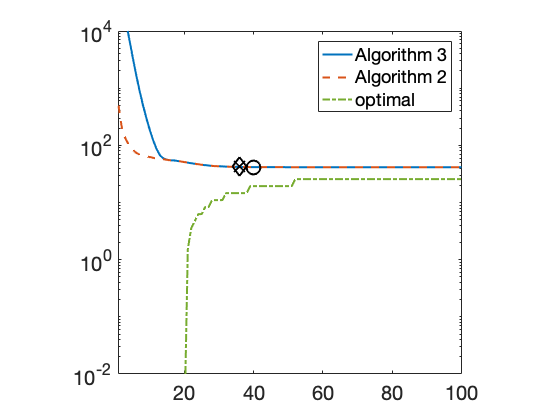} & 
\includegraphics[width=3cm]{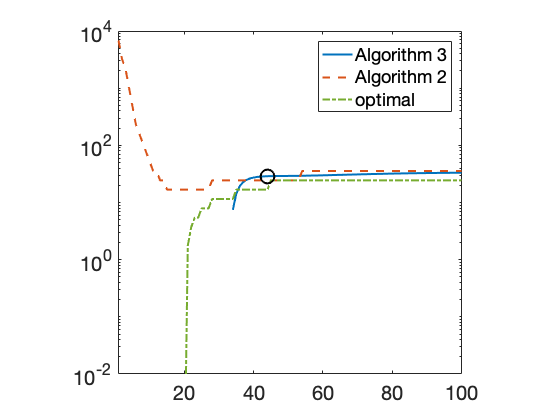} & 
\includegraphics[width=3cm]{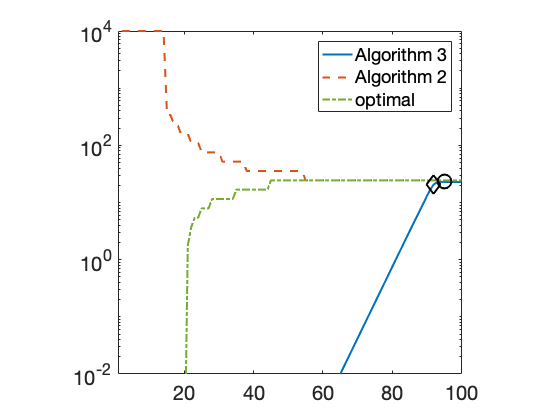}
&
\includegraphics[width=3cm]{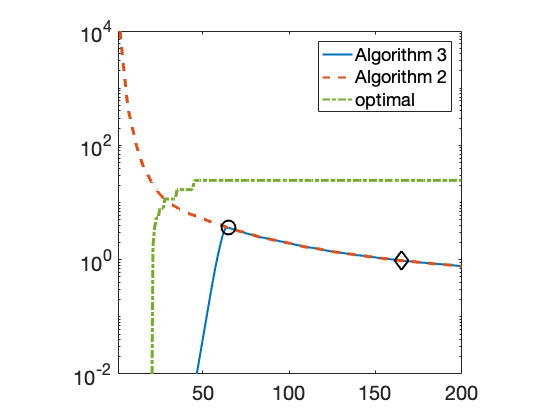} \\
{\small {(2a)}} & {\small {(2b)}} & {\small {(2c)}} & {\small {(2d)}} \\ 
\includegraphics[width=3cm]{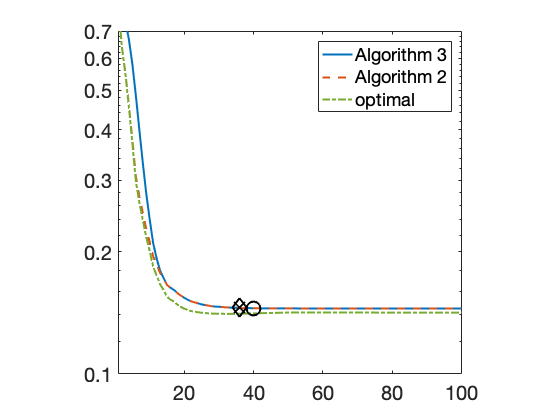} & 
\includegraphics[width=3cm]{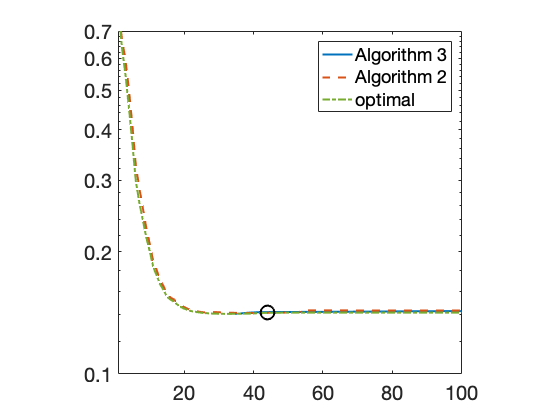} & 
\includegraphics[width=3cm]{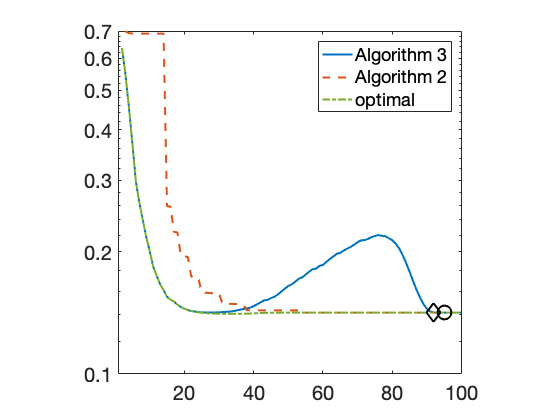}
&
\includegraphics[width=3cm]{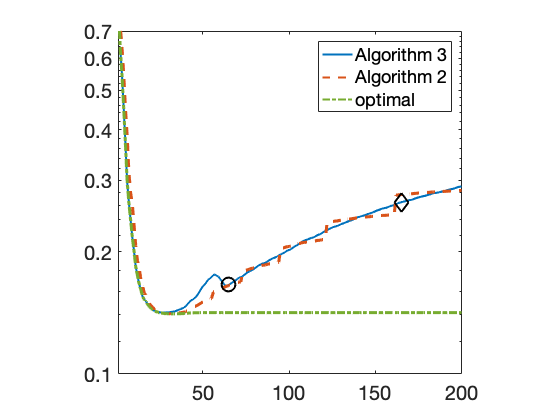}
\end{tabular}
\caption{Example 2. First row: values of the regularization parameter $\alpha_k$ versus number of iterations $k$. Second row: values of the relative restoration errors $\RRE(\alpha_k,k)$ versus number of iterations $k$. The following parameter choice rules are considered: (a) discrepancy principle; (b) GCV; (c) quasi-optimality criterion; (d) Regi\'{n}ska criterion. Special markers highlight the stopping iterations (with the conventions explained in Table \ref{tab:markers}).}
\label{fig:TOMO_graphs}
\end{figure}
%
Looking at Figure \ref{fig:TOMO_graphs} it is evident that 
the values of the parameter $\alpha_k$ computed by Algorithm \ref{alg:new} converge to the values obtained by applying a hybrid method. 
This can also be observed in Figure \ref{fig:TOMO_Valleys}, which displays the \emph{higher-level surface} for this test problem and for the parameter choice rules listed in Table \ref{tab:rules}. 
Special markers are used to highlight the values corresponding to the couples $(\alpha_k,k)$ computed by Algorithms \ref{alg:A2} and \ref{alg:new}, and the optimal values. Looking at Figure \ref{fig:TOMO_Valleys} it is evident that the shape of these surfaces stabilizes already for values of $k$ that are typically very small with respect to the dimension of the full-size problem. Note also that the GCV curves are very flat for combinations of small values of $k$ and small values of $\alpha$, while they quickly get steeper when $k$ is increased (see Figure \ref{fig:TOMO_Valleys} (b)): because of this, $k^\ast$ GKB iterations should be performed before starting the modified Newton method at line 4 of Algorithm \ref{alg:new} (see Section \ref{sec:GCV} for additional details).
\begin{figure}[ht]
\centering
\begin{tabular}{cc}
\vspace{0.2cm}
{\small {(a)}} & {\small {(b)}}\\ 
\includegraphics[width=5.5cm]{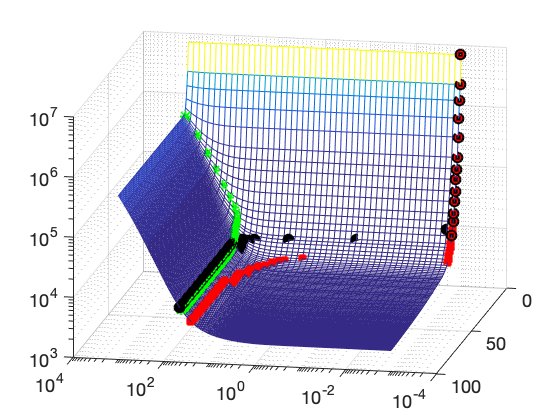} & 
\includegraphics[width=5.5cm]{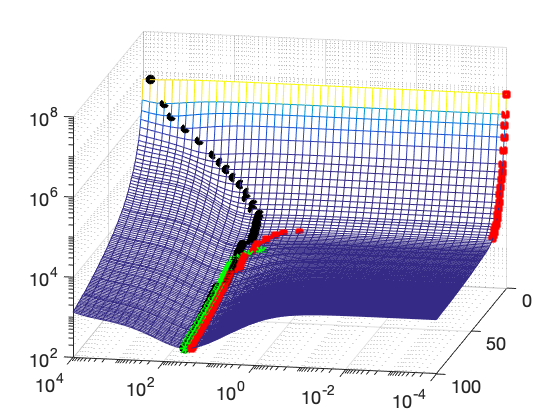} \\
{\small {(c)}} & {\small {(d)}}\\ 
\includegraphics[width=5.5cm]{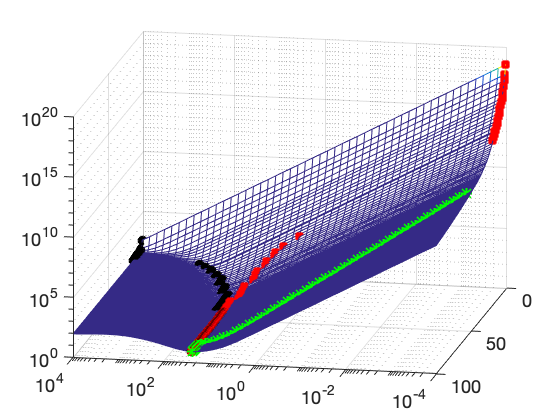} & 
\includegraphics[width=5.5cm]{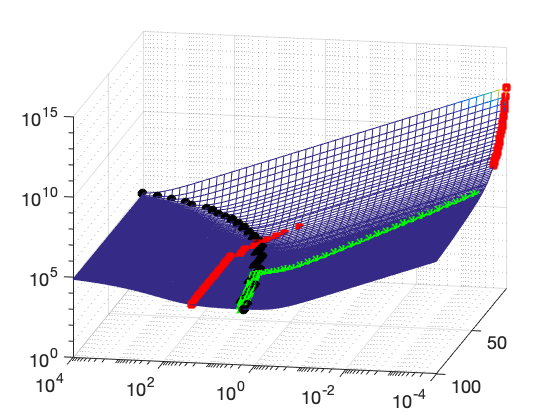}
\end{tabular}
\caption{\emph{Higher-level surface} for Example 2. 50 logarithmically equispaced values of $\alpha$ between $10^{-4}$ and $10^4$ and $k=1,...,100$ are considered. (a) Discrepancy principle (i.e., $\|b-Ax_k(\alpha)\|^2$); (b) GCV (i.e., upper bounds $P_k(\alpha)$ in (\ref{eqn:GCVtrace})); (c) quasi-optimality criterion (i.e., upper bounds $P_k(\alpha)$ in (\ref{qoup})); (d) Regi\'{n}ska criterion (i.e., upper bounds $P_k(\alpha)$ in (\ref{rup})). The green stars and black circles highlight the quantities computed by Algorithm \ref{alg:new} and Algorithm \ref{alg:A2}, respectively; the red squares denote the optimal quantities.} \label{fig:TOMO_Valleys}
\end{figure}
In most cases, the stopping criteria proposed in Section \ref{sec:new} succeed in stopping the iterations of Algorithm \ref{alg:new} when a good regularization parameter is computed (i.e., at a point that is quite close to the optimal one); 
when using Regi\'{n}ska criterion, stopping rule (48) fails (i.e., the method does not stop within the maximum number of performed iterations; see frame (d) of Figure \ref{fig:TOMO_Valleys}). 
\begin{figure}
\centering
\begin{tabular}{cc}
\vspace{0.2cm}
{\small {(a)}} & {\small {(b)}}\\ 
\includegraphics[width=5.5cm]{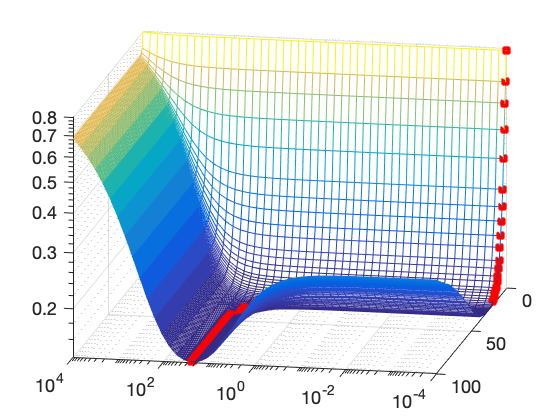} & 
\includegraphics[width=5.5cm]{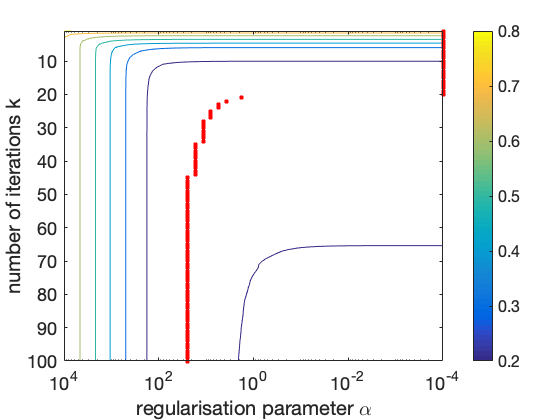}
\end{tabular}
\caption{\emph{Error surface} for Example 2. 50 logarithmically equispaced values of $\alpha$ between $10^{-4}$ and $10^4$ and $k=1,...,100$ are sampled. The markers highlight the combinations of the sampled values of $\alpha$ and $k$ leading the minimal relative reconstruction errors, for all $k=1,...,100$.}
\label{fig:TOMO_OptimalValley}
\end{figure}
Finally, Figure \ref{fig:TOMO_OptimalValley} displays the \emph{error surface} for this test problem.
The behavior of the optimal value of the regularization parameters at each iteration (highlighted by a special marker) is analogous to the one displayed in Figure \ref{fig:DUBLURRING_OptimalValley} (and in particular, the the value of the optimal regularization parameter $\alpha$ at each iteration $k$ stabilizes after a small number of iterations).

\section{Conclusions and future work}\label{sec:end}
This paper described and analyzed a new class of algorithms (Algorithm \ref{alg:new}) for the solution of bilevel optimization problems (\ref{GenP}) arising when simultaneously computing a Tikhonov-regularized solution and a regularization parameter according to a given rule, in the framework of large-scale linear inverse problems. By a novel use of Krylov projection methods based on the GKB algorithm, its connections with Gaussian quadrature rules, and a modified Newton method, the proposed approach ``interlaces'' the iterations performed to apply a given (nonlinear) parameter choice rule and the iterations performed to iteratively solve the (linear) Tikhonov-regularized problem, giving rise to an efficient and principled strategy that delivers results comparable to the ones obtained with well-established solvers (e.g., traditional hybrid methods).

Future work includes the natural extension of the new class of algorithms to work with Krylov projection methods that are based on algorithms other than GKB (e.g., the Arnoldi algorithm or flexible Krylov methods; see \cite{flexi}): while the computations involved in Algorithm \ref{alg:new} can be adapted to these situations, the theoretical analysis of the resulting strategies needs to be carefully rethought. Moreover, the new class of algorithms can be extended to handle Tikhonov-TSVD regularization, i.e., regularization methods that apply Tikhonov method to a TSVD-projected linear system (see, e.g., \cite{Renaut2017}): in these cases, one should replace the Krylov space $\Kry_k$ appearing in Algorithm \ref{alg:new} by the space spanned by the first $k$ right singular vectors of $A$; a careful theoretical analysis would be needed to prove convergence results. Also, other parameter choice strategies that can be expressed in the framework of bilevel optimization problems (e.g., the UPRE criterion, see \cite{Renaut2017} and the references therein) can be considered. To conclude, the framework of Algorithm \ref{alg:new} is very general, and can be potentially extended to a variety of bi-level optimization methods that involve the solution of a nonlinear higher-level problem and a linear lower-level problem. 

\bibliographystyle{plain}      
\bibliography{biblio}  
\end{document}